\documentclass[a4paper]{article}
\usepackage{amsfonts}
\usepackage{amssymb,amsthm, amsmath,graphicx,bm,ebezier,amsthm}
\usepackage{hyperref}
\usepackage{caption}
\usepackage{url,lineno}
\usepackage{arydshln}
\usepackage[noend]{algpseudocode}
\usepackage[ruled,vlined,linesnumbered]{algorithm2e}
\usepackage{lscape}
\usepackage{multirow}
\usepackage[dvipsnames]{xcolor}

\usepackage{tikz}
\usetikzlibrary{arrows.meta}
\tikzset{
  my tree/.style={
    <-, 
    nodes={minimum size = .25cm},
    >=Stealth[],
   level 1/.style={sibling distance=30pt}
  },
}

\vfuzz10pt \hfuzz10pt

\newtheorem{theorem}{Theorem}

\newtheorem{proposition}[theorem]{Proposition}
\newtheorem{corollary}[theorem]{Corollary}
\newtheorem{lemma}[theorem]{Lemma}

\theoremstyle{remark}

\newtheorem{example}[theorem]{Example}
\newtheorem{remark}[theorem]{Remark}

\usepackage{todonotes}

\def\CaH{\mathcal{H}}
\def\CaD{\mathcal{D}}
\def\CaC{\mathcal{C}}
\def\CaA{\mathcal{A}}
\def\CaB{\mathcal{B}}

\def\N{\mathbb{N}}
\def\Z{\mathbb{Z}}
\def\Q{\mathbb{Q}}
\def\R{\mathbb{R}}
\def\Fb{\mathrm{Fb}}

\def\k{\mathrm{k}}

\title{On the quotient of affine semigroups by a positive integer}

\date{}
\author{
J. I. Garc\'{\i}a-Garc\'{\i}a,
R. Tapia-Ramos,
and A. Vigneron-Tenorio
}

\begin{document}


\maketitle

\begin{abstract}

This work delves into the {\it quotient of an affine semigroup by a positive integer}, exploring its intricate properties and broader implications. We unveil an {\it associated tree} that serves as a valuable tool for further analysis. Moreover, we successfully generalize several key irreducibility results, extending their applicability to the more general class of $\CaC$-semigroup quotients. To shed light on these concepts, we introduce the novel notion of an {\it arithmetic variety of affine semigroups}, accompanied by illuminating examples that showcase its power.

\end{abstract}

{\small

{\it Key words:} affine semigroup, arithmetic variety, $\CaC$-semigroup, Frobenius element, irreducibility, quotient by a positive integer, rooted tree.

2020 {\it Mathematics Subject Classification:} 20M14, 20M07, 11D07, 05C05, 13H10.}

\section*{Introduction}

We consider an {\em affine semigroup} $S$ as a non-empty subset of $\N^p$ ($\N$ denoting the set of non-negative integers) containing the zero element, closed under the usual addition in $\N^p$, and such that there exists a finite subset $A=\{a_1,\ldots ,a_q\}$ of $S$ with $S=\{\sum_{i=1}^q\lambda_ia_i\mid \lambda_1,\ldots ,\lambda_q\in\N\}$. A fundamental result states that each affine semigroup possesses a unique minimal generating set (see \cite{libroRosalesMonoids}). When  $p=1$, and the elements of $A$ are coprime, we say that $S$ is a {\em numerical semigroup}. If $p> 1$, $\CaC$ the minimal positive integer cone containing to $S$, and $S$ satisfies that $\CaC \setminus S$ is finite, $S$ is called {\em $\CaC$-semigroup}  (see \cite{Csemigroup}). Notably, when $\CaC=\N^p$, the semigroup $S$ is known as {\em generalized numerical semigroup} (see \cite{GenSemNp}).

Given an affine semigroup $S \subseteq \N^p$, its quotient by a positive integer $d$ is defined as $\frac S d = \{x \in \N^p \mid dx \in S\}$. This construction plays a significant role in the study of numerical semigroups, appearing in numerous works such as \cite{bogart}, \cite{OR2020}, \cite{JCR-PAGS-2008}, and \cite{Swanson}. However, despite its relevance in numerical contexts, a systematic investigation of quotients of affine semigroups, in general, is surprisingly absent in the literature. We address this gap by presenting various results about quotients of affine semigroups by a positive integer. While some of these results are known for numerical semigroups, others hold true only for general affine semigroups with $p>1$ or $\CaC$-semigroups.

We present an efficient algorithm for computing a generating set of the quotient of any affine semigroup by a positive integer. This algorithm provides a valuable tool for further analysis of such quotients. Moreover, we demonstrate that the families of convex body semigroups and Cohen-Macaulay simplicial semigroups exhibit the remarkable property of being closed under quotients. This closure property unveils a powerful tool for generating infinite Cohen-Macaulay semigroups from a single instance, paving the way for exploring their properties and applications.

A central focus of this work is identifying all $\CaC$-semigroups that are quotients of a given $\CaC$-semigroup (Theorem \ref{theorem_D_d}). Inspired by this set, we construct a directed graph under specific hypotheses. Additionally, we delve into arithmetic varieties (non-empty sets closed under intersection and quotient by any positive integer), introducing three distinct families of affine semigroups that form three distinct arithmetic varieties.

Another key objective is to study the irreducibility of quotients of $\CaC$-semigroups. As shown in \cite{SomepropCsemgp}, every irreducible semigroup is either symmetric or pseudo-symmetric. We demonstrate that any $\CaC$-semigroup is part of infinitely many symmetric $\CaC$-semigroups and infinitely many pseudo-symmetric $\CaC$-semigroups. Furthermore, we provide a characterization of irreducible $\CaC$-semigroups relative to the quotient of $\CaC$-semigroups (Theorem \ref{cairrquo}).

Our paper is structured as follows. Section~\ref{pre} establishes the necessary background for subsequent sections. In Section~\ref{thequotient}, we introduce the concept of quotients of affine semigroups, along with their corresponding systems of generators. Section~\ref{compD_dTree} delves into the set of all $\CaC$-semigroups $T$ for which $S=\frac{T}{d}$. This section also introduces the novel concept of arithmetic varieties of affine semigroups, employed to compute an associated tree for these semigroups. Section~\ref{irre} presents several new results on the irreducibility of quotients of $\CaC$-semigroups, drawing inspiration from Chapter 5, Section 2 of \cite{libroRosales}. Finally, Section~\ref{av} revisits the exploration of arithmetic varieties, showcasing their application through illustrative examples.

\section{Preliminars}
\label{pre}

Let $\Q$ and $\R_\geq$ denote the sets of rational and non-negative real numbers, respectively. For any $d \in \N$, define $[d] = \{0, \ldots, d\}$. Given $X \subseteq \N^p$, let $dX = \{dx \mid x \in X\}$.

This work employs various orders on different sets. We denote by $\leq$ the usual partial order on $\N^p$, where $x \leq y$ implies $y - x \in \N^p$. Additionally, we consider a total order $\preceq$ on $\N^p$ satisfying compatibility with addition and ensuring $0 \preceq c$ for any $c \in \N^p$. Throughout this work, we fix a specific total order $\preceq$.

For a $\CaC$-semigroup $S$, any element $x \in \CaC \setminus S$ is called a {\em gap} of $S$. We call $x \in S$ a \emph{pseudo-Frobenius element} if $x + (S \setminus \{0\}) \subset S$. The set of gaps of $S$ is denoted by $\CaH(S)$, and its set of pseudo-Frobenius elements by $PF(S)$. The cardinality of $\CaH(S)$ is known as the {\em genus} of $S$. Following these definitions, the {\em Frobenius element} of $S$, denoted by $\Fb(S)$, is the maximum element in $\CaH(S)$ with respect to the fixed total order on $\N^p$. Note that $\Fb(S) = \max_\preceq PF(S)$.

The following proposition provides a useful characterization of the gaps of a $\CaC$-semigroup in terms of its pseudo-Frobenius elements.

\begin{proposition}\cite[Corollary 2.15]{AffineSmgp}
\label{carcH(S)}
Let $S$ be a $\CaC$-semigroup and $x\in \CaC$. Then, $x\notin S$ if and only if there exists $f\in PF(S)$ such that $f-x\in S.$
\end{proposition}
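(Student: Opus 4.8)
The plan is to prove the two implications separately. The reverse implication is a direct consequence of the closure of $S$ under addition and of the fact that a pseudo-Frobenius element is in particular a gap (so it lies outside $S$); the forward implication is the substantive one and I would obtain it by a maximality argument over the finite set of gaps. For the reverse direction, assume there is $f\in PF(S)$ with $f-x\in S$. Since $f\notin S$, if we also had $x\in S$ then $f=x+(f-x)$ would be a sum of two elements of $S$, forcing $f\in S$ by closure, a contradiction; hence $x\notin S$. (This also correctly excludes $x=0$, for then $f-x=f\notin S$ and no such $f$ exists.)

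For the forward direction, assume $x\in\CaC\setminus S$ and consider the set
\[
B=\{\,g\in\CaC\setminus S \mid g-x\in S\,\}.
\]
I would first observe that $B$ is non-empty, since $x\in B$ (indeed $x\notin S$ and $x-x=0\in S$), and that $B$ is finite, because $\CaC\setminus S=\CaH(S)$ is finite, which is exactly the defining property of a $\CaC$-semigroup. As $\preceq$ is a total order on $\N^p$, the finite non-empty set $B$ has a $\preceq$-maximum $f$. The goal is then to show $f\in PF(S)$, which together with $f\in B$ (so $f-x\in S$) completes the proof.

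To verify that $f$ is pseudo-Frobenius, note $f\notin S$ since $f\in B$, and suppose for contradiction that $f$ is not pseudo-Frobenius, i.e.\ there is $s\in S\setminus\{0\}$ with $f+s\notin S$. Because $\CaC$ is closed under addition, $f+s\in\CaC\setminus S$ is again a gap, and $(f+s)-x=(f-x)+s\in S$ since $f-x\in S$, $s\in S$, and $S$ is closed under addition; hence $f+s\in B$. But the compatibility of $\preceq$ with addition, together with $0\prec s$ (as $s\neq 0$), gives $f\prec f+s$, contradicting the maximality of $f$ in $B$. Therefore $f+s\in S$ for every $s\in S\setminus\{0\}$, so $f\in PF(S)$.

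The main obstacle is precisely this forward direction: guaranteeing the existence of a pseudo-Frobenius element lying ``above'' $x$. This is where the two hypotheses of the statement are genuinely used, namely the finiteness of $\CaH(S)$ (the $\CaC$-semigroup condition) and the order-compatibility of $\preceq$; without finiteness the set $B$ need not attain a maximum. I note that one could equally run the argument with the componentwise partial order $\leq$, taking $f$ to be any $\leq$-maximal element of $B$ (which exists by finiteness); the same computation shows such an $f$ is pseudo-Frobenius. Using the fixed total order $\preceq$ is merely cleaner, as it yields a unique maximum.
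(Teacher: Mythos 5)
Your proof is correct, but there is nothing in the paper to compare it against: the paper does not prove this proposition at all, it imports it as \cite[Corollary 2.15]{AffineSmgp}. Judged on its own, your argument is sound and is the standard one for statements of this type (it parallels the numerical-semigroup case in \cite{libroRosales}). The reverse implication uses only closure of $S$ under addition and the fact that pseudo-Frobenius elements are gaps; the forward implication takes the $\preceq$-maximum $f$ of the finite non-empty set $B=\{g\in\CaC\setminus S\mid g-x\in S\}$ and shows maximality forces $f+(S\setminus\{0\})\subseteq S$, correctly invoking the three ingredients that make this work: $\CaC$ is closed under addition (so $f+s$ would again be a gap), $\preceq$ is compatible with addition, and $0\prec s$ for $s\neq 0$, which in the cancellative monoid $\N^p$ gives the strict inequality $f\prec f+s$. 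Your alternative with a $\leq$-maximal element of $B$ is equally valid. One point you silently (and correctly) repaired: the paper's definition of pseudo-Frobenius element literally reads ``$x\in S$'', which must be a typo for ``$x\in\CaH(S)$'' --- under the literal reading every element of $S$ would be pseudo-Frobenius by closure, and the proposition would then be false (e.g.\ $x\in S\setminus\{0\}$ and $f=2x$ would witness the right-hand side). Your proof uses the intended definition, which is the one under which the statement, and the rest of the paper, makes sense.
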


A $\CaC$-semigroup $S$ is irreducible if it cannot be expressed as an intersection of two $\CaC$-semigroups containing $S$ properly. Equivalently, $S$ is irreducible if and only if the set $PF(S)$ is equal to $\{\Fb(S)\}$ or $\{\Fb(S),\frac{\Fb(S)}{2}\}$ (see \cite{PsFbCsemigp}). If $PF(S)=\{\Fb(S)\}$, we say that $S$ is symmetric, and pseudo-symmetric when $PF(S)=\{\Fb(S),\frac{\Fb(S)}{2}\}$.

From the relationship between the elements belonging to $S$ and its Frobenius element, the irreducibility of $S$ is determined as follows.

\begin{proposition}\cite[Theorem 3.6]{AffineSmgp}
\label{caracSIM}
    Let $S$ be a $\CaC$-semigroup and $x\in\CaC$. The following conditions are equivalent:
    \begin{itemize}
        \item [(i)] $S$ is a symmetric $\CaC$-semigroup.
        \item [(ii)] $x\in S$ if and only if $\Fb(S)-x\notin S$.  
    \end{itemize}
\end{proposition}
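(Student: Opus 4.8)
The plan is to prove the two implications directly, using Proposition \ref{carcH(S)} together with the two basic facts that $\Fb(S)$ is the maximum gap, hence $\Fb(S)\notin S$, and that $\Fb(S)=\max_\preceq PF(S)\in PF(S)$. Write $F=\Fb(S)$, and read condition (ii) as the assertion that $x\in S\Leftrightarrow F-x\notin S$ holds for every $x\in\CaC$ (condition (i) is a property of $S$ alone, so this is the only sensible reading).

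For (i) $\Rightarrow$ (ii), assume $S$ is symmetric, i.e. $PF(S)=\{F\}$, and fix $x\in\CaC$. I would first show $x\in S\Rightarrow F-x\notin S$: if $F-x$ were in $S$, then closure of $S$ under addition would give $F=x+(F-x)\in S$, contradicting $F\notin S$. Next I would show $x\notin S\Rightarrow F-x\in S$: since $x\in\CaC$, Proposition \ref{carcH(S)} produces some $f\in PF(S)$ with $f-x\in S$, and symmetry forces $f=F$, so $F-x\in S$. The first implication gives the forward direction of the equivalence and the contrapositive of the second gives the backward direction, establishing (ii).

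For (ii) $\Rightarrow$ (i), I would show that $F$ is the only pseudo-Frobenius element, arguing by contradiction. Suppose $f\in PF(S)$ with $f\neq F$. Then $f$ is a gap, so $f\notin S$ and $f\in\CaC$; applying (ii) with $x=f$ gives $F-f\in S$. As $f\neq F$, we have $F-f\neq 0$, hence $F-f\in S\setminus\{0\}$, and the defining property of the pseudo-Frobenius element $f$ then yields $f+(F-f)=F\in S$, contradicting $F\notin S$. Thus $PF(S)\subseteq\{F\}$, and since $F\in PF(S)$ we obtain $PF(S)=\{F\}$, so $S$ is symmetric.

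The computations are short, so the work lies almost entirely in the bookkeeping. The main point to get right is the (ii) $\Rightarrow$ (i) step, where the contradiction is manufactured by feeding the element $F-f\in S\setminus\{0\}$, produced by (ii), into the pseudo-Frobenius property of $f$. A secondary point is that every step asserting $F-x\in S$ is obtained constructively (either from closure of $S$ or from Proposition \ref{carcH(S)}), so membership of $F-x$ in $\CaC$ or in $\N^p$ never has to be checked by hand; and whenever $F-x\notin\N^p$ or $F-x\notin\CaC$ the conclusion $F-x\notin S$ is automatic because $S\subseteq\CaC\subseteq\N^p$, so the biconditional is unaffected.
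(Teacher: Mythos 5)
Your proof is correct, but there is nothing in the paper to compare it against: Proposition \ref{caracSIM} is not proved in this paper at all, it is imported as a known result from \cite[Theorem 3.6]{AffineSmgp}, and the authors only use it (e.g.\ in Theorem \ref{thrmQuoSim}) as a black box. So your argument should be judged on its own, and it holds up. The implication (i) $\Rightarrow$ (ii) is exactly right: the forward half needs only closure under addition plus $\Fb(S)\notin S$, and the backward half is Proposition \ref{carcH(S)} combined with $PF(S)=\{\Fb(S)\}$, which is where symmetry enters. The implication (ii) $\Rightarrow$ (i) is the genuinely non-trivial step, and your mechanism -- take $f\in PF(S)$ with $f\neq\Fb(S)$, use (ii) to get $\Fb(S)-f\in S\setminus\{0\}$, then feed it into the pseudo-Frobenius property of $f$ to force $\Fb(S)\in S$ -- is the standard and correct way to close the loop; together with $\Fb(S)=\max_\preceq PF(S)\in PF(S)$ it gives $PF(S)=\{\Fb(S)\}$. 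Two small caveats, neither fatal: first, your step ``$f$ is a gap'' relies on pseudo-Frobenius elements being gaps, whereas the paper's preliminaries literally say ``we call $x\in S$ a pseudo-Frobenius element'' -- this is a typo in the paper (it must read $x\in\CaH(S)$, since the paper immediately asserts $\Fb(S)=\max_\preceq PF(S)$ and $\Fb(S)$ is a gap), and your reading is the intended one; second, the whole statement implicitly assumes $S\neq\CaC$ so that $\Fb(S)$ exists, an assumption you share with the paper's formulation, so no blame attaches. Your closing remark about $\Fb(S)-x$ failing to lie in $\N^p$ or $\CaC$ is also handled correctly: such elements are automatically outside $S$, so the biconditional is unaffected.
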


\begin{proposition}\cite[Theorem 3.7]{AffineSmgp}
    \label{caracPSEUDOSIM}
    Let $S$ be a $\CaC$-semigroup and $x\in\CaC$. Then,
    $S$ is a pseudo-symmetric $\CaC$-semigroup if and only if the following conditions hold:
    \begin{itemize}
        \item [(i)] $\Fb(S)\in2\N^p$.
        \item [(ii)] $x\in S$ if and only if $\Fb(S)-x\notin S$ and $x\ne\frac{\Fb(S)}{2}$.  
    \end{itemize}
\end{proposition}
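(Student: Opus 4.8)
The plan is to prove the two implications separately, using Proposition~\ref{carcH(S)} as the main engine and mirroring the structure of the symmetric case in Proposition~\ref{caracSIM}. Throughout, I will repeatedly exploit the definition of a pseudo-Frobenius element, namely that $f\in PF(S)$ satisfies $f+(S\setminus\{0\})\subseteq S$, together with the fact that $\Fb(S)=\max_\preceq PF(S)\in PF(S)$. I will read condition (ii) as holding for every $x\in\CaC$, which is the only sensible interpretation that makes the stated biconditional an intrinsic property of $S$.

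For the forward implication, assume $S$ is pseudo-symmetric, i.e. $PF(S)=\{\Fb(S),\frac{\Fb(S)}{2}\}$. Condition (i) is then immediate, since $\frac{\Fb(S)}{2}\in PF(S)\subseteq\CaH(S)$ forces $\frac{\Fb(S)}{2}\in\N^p$. For (ii), if $x\in S$ then $x\neq\frac{\Fb(S)}{2}$ because the latter is a gap, and $\Fb(S)-x\notin S$, for otherwise $\Fb(S)=x+(\Fb(S)-x)$ would exhibit the gap $\Fb(S)$ as a sum of two elements of $S$. Conversely, suppose $\Fb(S)-x\notin S$ and $x\neq\frac{\Fb(S)}{2}$, and assume toward a contradiction that $x\notin S$. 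Proposition~\ref{carcH(S)} yields $f\in\{\Fb(S),\frac{\Fb(S)}{2}\}$ with $f-x\in S$; the choice $f=\Fb(S)$ contradicts $\Fb(S)-x\notin S$, while $f=\frac{\Fb(S)}{2}$ gives $\frac{\Fb(S)}{2}-x\in S$, whence either this element is $0$ (forcing the excluded $x=\frac{\Fb(S)}{2}$) or it lies in $S\setminus\{0\}$ and then $\frac{\Fb(S)}{2}+(\frac{\Fb(S)}{2}-x)=\Fb(S)-x\in S$ again contradicts the hypothesis.

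For the converse, assume (i) and (ii). First, applying (ii) to $x=\frac{\Fb(S)}{2}$ shows $\frac{\Fb(S)}{2}\notin S$, i.e. it is a gap. Next I would verify $\frac{\Fb(S)}{2}\in PF(S)$: for $s\in S\setminus\{0\}$, if $\frac{\Fb(S)}{2}+s\notin S$ then (ii) forces $\Fb(S)-(\frac{\Fb(S)}{2}+s)=\frac{\Fb(S)}{2}-s\in S$ (the alternative $\frac{\Fb(S)}{2}+s=\frac{\Fb(S)}{2}$ is impossible as $s\neq0$), and then $\frac{\Fb(S)}{2}=(\frac{\Fb(S)}{2}-s)+s\in S$ contradicts the previous step. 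Finally, for any $f\in PF(S)$ with $f\neq\frac{\Fb(S)}{2}$, condition (ii) applied to the gap $f$ gives $\Fb(S)-f\in S$; if moreover $f\neq\Fb(S)$ then this element lies in $S\setminus\{0\}$, so $f+(\Fb(S)-f)=\Fb(S)\in S$, contradicting that $\Fb(S)$ is a gap. Hence $PF(S)\subseteq\{\Fb(S),\frac{\Fb(S)}{2}\}$, and combined with $\Fb(S)\in PF(S)$ and $\frac{\Fb(S)}{2}\in PF(S)$ we get equality, so $S$ is pseudo-symmetric.

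The routine parts are the additive manipulations; the main obstacle I anticipate is the careful treatment of the element $\frac{\Fb(S)}{2}$ and the zero-difference edge cases. At several points a difference such as $f-x$ or $\frac{\Fb(S)}{2}-s$ must be split according to whether it vanishes, since only when it lies in $S\setminus\{0\}$ can one invoke the defining absorption property $f+(S\setminus\{0\})\subseteq S$; the vanishing case is precisely what produces the exceptional clause $x\neq\frac{\Fb(S)}{2}$ in (ii). One must also keep track of membership in $\CaC$ before appealing to Proposition~\ref{carcH(S)} and to (ii), which is automatic here since every relevant point is obtained as a sum of elements of $S\subseteq\CaC$ or as $\frac{\Fb(S)}{2}\in\CaC$, the latter guaranteed by (i) and the cone structure of $\CaC$.
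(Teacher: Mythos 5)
The paper itself contains no proof of this proposition: it is imported verbatim from \cite[Theorem 3.7]{AffineSmgp}, so there is no internal argument to compare yours against line by line. Your blind proof is correct and self-contained. In the forward direction you correctly extract (i) from the fact that $\frac{\Fb(S)}{2}\in PF(S)$ forces $\frac{\Fb(S)}{2}\in\N^p$, and you obtain (ii) from closure under addition together with Proposition~\ref{carcH(S)}, splitting the case $f=\frac{\Fb(S)}{2}$ according to whether $\frac{\Fb(S)}{2}-x$ vanishes --- exactly the edge case that generates the exceptional clause $x\neq\frac{\Fb(S)}{2}$. In the converse you establish, in the right order, that $\frac{\Fb(S)}{2}$ is a gap, that it absorbs $S\setminus\{0\}$, and that any $f\in PF(S)$ other than $\frac{\Fb(S)}{2}$ must equal $\Fb(S)$ (since otherwise $\Fb(S)=f+(\Fb(S)-f)\in S$), giving $PF(S)=\{\Fb(S),\frac{\Fb(S)}{2}\}$. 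Two interpretive choices you made deserve explicit mention because they are forced but not stated cleanly in the paper: first, the paper's definition of pseudo-Frobenius element reads ``$x\in S$'', an evident typo for ``$x\notin S$'' (under the literal reading $PF(S)=S$, which contradicts $\Fb(S)=\max_\preceq PF(S)$ being a gap), and you correctly work with $PF(S)\subseteq\CaH(S)$; second, your verification that $\frac{\Fb(S)}{2}\in\CaC$ --- from condition (i) together with the fact that $\CaC$ is the set of lattice points of a rational cone, hence stable under halving inside $\N^p$ --- is precisely what legitimizes applying (ii) and Proposition~\ref{carcH(S)} at that point, and would be a genuine gap if omitted. The argument is the natural affine analogue of the classical numerical-semigroup proof and, modulo notation, is surely close to the one in the cited source.
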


It is well-known that the fundamental gaps of a $\CaC$-semigroup determine it uniquely (see \cite{SomepropCsemgp}). According to the usual literature, a {\em fundamental gap} of a $\CaC$-semigroup $S$ is a gap $x$ of $S$ such that $2x,3x\in S$.  We show that fundamental gaps of the quotient of a $\CaC$-semigroup by a positive integer can be obtained from the fundamental gaps of the original $\CaC$-semigroup.

\begin{proposition}\label{propFG}
Let $S$ be a  $\CaC$-semigroup and $d\in \N\setminus\{0\}$. Then,
\[
FG\left(\frac{S}{d}\right) = \left\{ \frac{h}{d} \mid h\in FG(S) \text{ and } h \equiv 0\mod d  \right\}.
\]
\end{proposition}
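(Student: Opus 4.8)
The plan is to reduce the statement to the definition of fundamental gap by exploiting the correspondence $x\mapsto dx$ between $\frac{S}{d}$ and $S$. The first step is to record that $\frac{S}{d}$ is itself a $\CaC$-semigroup sharing the \emph{same} cone $\CaC$ as $S$, which is what makes the notions of gap and fundamental gap of $\frac{S}{d}$ meaningful and directly comparable with those of $S$. This is elementary: for $x\in\CaC$ one has $dx=x+\dots+x\in\CaC$, so $x\notin\frac{S}{d}$ is equivalent to $dx\in\CaC\setminus S$; since $\CaC\setminus S$ is finite, so is $\CaC\setminus\frac{S}{d}$, and moreover $dx\in\CaC$ forces $x\in\CaC$ because $\CaC$ is a cone. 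In particular this already identifies the gaps,
\[
\CaH\!\left(\frac{S}{d}\right)=\{x\in\CaC\mid dx\in\CaH(S)\}.
\]

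The heart of the argument is then the single equivalence
\[
x\in FG\!\left(\tfrac{S}{d}\right)\iff dx\in FG(S),
\]
which I would obtain by unfolding definitions. By definition $x\in FG(\frac{S}{d})$ means $x\in\CaH(\frac{S}{d})$ together with $2x,3x\in\frac{S}{d}$. Writing $y=dx$ and using the defining property of the quotient, namely $kx\in\frac{S}{d}\iff ky\in S$, these conditions read exactly $y\in\CaH(S)$, $2y\in S$ and $3y\in S$, i.e. $y=dx\in FG(S)$. Matching the three clauses one-to-one gives the equivalence with no further work.

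Finally I would convert this equivalence into the claimed set equality by a double inclusion. If $x\in FG(\frac{S}{d})$, then $h:=dx\in FG(S)$ and $h\equiv 0\bmod d$, so $x=h/d$ belongs to the right-hand set. Conversely, given $h\in FG(S)$ with $h\equiv 0\bmod d$, put $x=h/d\in\N^p$; since $h\in\CaC$ and $\CaC$ is a cone we have $x\in\CaC$, and $dx=h\in FG(S)$ yields $x\in FG(\frac{S}{d})$ by the equivalence.

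I do not anticipate a serious obstacle: once the cone identification is in place the proposition is a bookkeeping exercise. The only points requiring genuine care are the two lattice-cone conditions---that multiplication by $d$ keeps an element inside $\CaC$, and that $h/d\in\CaC\cap\N^p$ precisely when $h\in\CaC$ is divisible by $d$ coordinatewise---since it is exactly these that make the hypothesis $h\equiv 0\bmod d$ both necessary and sufficient for a fundamental gap of $S$ to descend to a fundamental gap of $\frac{S}{d}$.
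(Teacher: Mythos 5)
Your proposal is correct and takes essentially the same route as the paper: both proofs unfold the definition of fundamental gap through the correspondence $x \mapsto dx$, using that $kx \in \frac{S}{d}$ if and only if $k(dx) \in S$. The only difference is one of care, not of substance — you make explicit the points the paper compresses into a single line (that $\frac{S}{d}$ is a $\CaC$-semigroup with the same cone $\CaC$, and the divisibility bookkeeping $h \equiv 0 \bmod d$ in the double inclusion).
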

\begin{proof}
    Since $d\in \N\setminus\{0\}$, $x$ belongs to $FG\left(\frac{S}{d}\right)$ if and only if $d x\notin S$, and $k d  x \in S$ for any $k\in \N$. Hence, $d x\in FG(S)$.
\end{proof}

The Ap\'ery set of a $\CaC$-semigroup $S$ relative to $m\in S\setminus\{0\}$ is defined as $Ap(S,m)=\{x\in S\mid x-m\in\CaH(S)\}$. If $d\in \N\setminus \{0\}$ divides to $m$, then we can describe $Ap\left(\frac{S}{d},\frac{m}{d}\right)$ in terms of $Ap(S,m)$ as follows.

\begin{proposition}\label{propApery}
    Let $S$ be a $\CaC$-semigroup and $m$ a non-zero element of $S$ such that $d$ divides to $m$. Then
    \[
    Ap\left(\frac{S}{d},\frac{m}{d}\right)=\left\{\frac{w}{d}\mid w\in Ap(S,m)\text{ and } w\equiv 0 \mod d \right\}.
    \]
\end{proposition}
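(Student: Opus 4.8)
The plan is to prove the two inclusions by directly unwinding the definition of the Apéry set on each side and translating every condition into one about $S$ via multiplication by $d$. The preliminary observation I would record is that $\frac{S}{d}$ is again a $\CaC$-semigroup sharing the same cone $\CaC$ as $S$: indeed, $x\in\frac{S}{d}$ forces $dx\in S\subseteq\CaC$, and since $\CaC$ is an integer cone this gives $x\in\CaC$, so that $\frac{S}{d}\subseteq\CaC$ and $\CaH\!\left(\frac{S}{d}\right)=\{x\in\CaC\mid dx\notin S\}$. I would also note at the outset that $\frac{m}{d}$ is a genuine non-zero element of $\frac{S}{d}$, since $m\in S\setminus\{0\}$ and $d\mid m$ give $\frac{m}{d}\in\N^p\setminus\{0\}$ with $d\cdot\frac{m}{d}=m\in S$; this is what makes the left-hand Apéry set well defined.

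The core of the argument is the equivalence
\[
y\in Ap\!\left(\tfrac{S}{d},\tfrac{m}{d}\right)\iff dy\in Ap(S,m).
\]
To establish it, I would expand the left-hand membership: $y\in Ap\!\left(\frac{S}{d},\frac{m}{d}\right)$ means $y\in\frac{S}{d}$ and $y-\frac{m}{d}\in\CaH\!\left(\frac{S}{d}\right)$. By the definition of the quotient the first condition reads $dy\in S$; by the description of $\CaH\!\left(\frac{S}{d}\right)$ above, the second reads $y-\frac{m}{d}\in\CaC$ (hence $y-\frac{m}{d}\in\N^p$, as $\CaC\subseteq\N^p$) together with $d\left(y-\frac{m}{d}\right)\notin S$, that is $dy-m\in\CaC$ and $dy-m\notin S$, i.e. $dy-m\in\CaH(S)$. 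Taken together these say precisely $dy\in S$ and $dy-m\in\CaH(S)$, which is the statement $dy\in Ap(S,m)$.

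With the equivalence in hand I would finish by the substitution $w=dy$. If $y$ lies in the left-hand set, then $w=dy\in Ap(S,m)$ and $w\in d\N^p$, so $w\equiv 0\bmod d$ and $y=\frac{w}{d}$ belongs to the right-hand set. Conversely, given $w\in Ap(S,m)$ with $w\equiv 0\bmod d$, the element $y=\frac{w}{d}$ lies in $\N^p$, satisfies $dy=w\in Ap(S,m)$, and hence by the equivalence lies in $Ap\!\left(\frac{S}{d},\frac{m}{d}\right)$. This yields the claimed equality.

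I expect the only delicate point to be the handling of the gap (cone-membership) condition: one must ensure that ``$y-\frac{m}{d}$ is a gap of $\frac{S}{d}$'' is genuinely equivalent to ``$dy-m$ is a gap of $S$'', which hinges on $\frac{S}{d}$ having the same cone $\CaC$ and on the compatibility of the containment $x\in\CaC$ with multiplication by the positive integer $d$. Everything else is a routine translation of definitions.
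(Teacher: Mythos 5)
Your proposal is correct and takes essentially the same approach as the paper: both proofs unwind the two Ap\'ery-set definitions and translate every condition through the substitution $w=dy$. The only difference is one of care rather than substance --- the paper's one-sentence proof writes the gap condition simply as $\frac{w}{d}-\frac{m}{d}\notin\frac{S}{d}$, glossing over the cone-membership requirement $y-\frac{m}{d}\in\CaC$ implicit in the definition $Ap(S,m)=\{x\in S\mid x-m\in\CaH(S)\}$, whereas you verify it explicitly via the (correct) observation that $\frac{S}{d}$ is a $\CaC$-semigroup with the same cone $\CaC$.
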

\begin{proof}
    By definition $x\in Ap\left(\frac{S}{d},\frac{m}{d}\right)$ if and only if $x=\frac{w}{d}\in \frac{S}{d}$ and $\frac{w}{d}- \frac{m}{d}\notin\frac{S}{d}$ where $w=d x,$ this means, $w\in Ap(S,m)$ and $w\equiv 0 \mod d$.
\end{proof}

The above propositions \ref{propFG} and \ref{propApery}, generalize to $\CaC$-semigroups Proposition \cite[Proposition 6.2]{libroRosales} and \cite[Proposition 6.5]{libroRosales}, respectively.

\section{Some families of affine semigroups closed under the quotient by a positive integer}
\label{thequotient}

Recall that the quotient of $S$ by $d$ is the set $\frac{S}{d}=\{ s\in \N^p\mid dx\in S \}$.
Consequently, we say that $\frac{S}{2}$ is one-half of $S$, and that $\frac{S}{4}$ is one-fourth of $S$. These specific cases will be relevant later in our work.

This section introduces certain families of affine semigroups where the quotient of any element by a positive integer remains within the same family. Specifically, we focus on convex body semigroups and Cohen-Macaulay simplicial semigroups. Additionally, we present an algorithm for computing the minimal generating set of the quotient of any affine semigroup by a positive integer.

We begin by outlining the algorithm. Let $A = \{a_1, \ldots, a_q\}$ be the minimal generating set of an affine semigroup $S \subseteq \N^p$, and $d \in \N \setminus \{0\}$. By definition of $\frac{S}{d}$, an element $x = (x_1, \ldots, x_p)$ belongs to $\frac{S}{d}$ if and only if there exists $\lambda = (\lambda_1, \ldots, \lambda_q) \in \N^q$ such that $dx = \sum_{i=1}^q \lambda_i a_i$. Equivalently, $x \in \frac{S}{d}$ if and only if $(\lambda, x) \in \N^{q} \times \N^p$ is a non-negative integer solution of the system of linear equations $(M \mid D) (\lambda, x)^t = 0$, where $M$ is the matrix with $a_i$ as its $i$-th column and $D$ is $-d$ times the identity matrix of dimension $p$. Importantly, the set of non-negative integer solutions of any system of homogeneous linear equations with rational coefficients forms an affine semigroup, which can be computed as detailed in \cite{Nsemig}. We denote by $\mathcal{X} \subset \N^{q+p}$ the affine semigroup defined by the set of non-negative integer solutions of the system, with $L = \{l_1, \ldots, l_k\}$ being its minimal generating set. Furthermore, we define $\pi(y)$ as the projection onto the last $p$ coordinates of $y = (\lambda, x) \in \mathcal{X}$.

\begin{lemma}
Let $S$ be an affine semigroup generated by $A=\{a_1,\ldots,a_q\}$, and $d$ a positive integer. The quotient $\frac{S}{d}$ is generated by $\pi (L)=\{ \pi (l_i)\mid i=1,\ldots ,k\}$.
\end{lemma}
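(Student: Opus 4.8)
The plan is to show that $\pi(L)$ generates $\frac{S}{d}$ by exploiting the fact that $\mathcal{X}$ is an affine semigroup with minimal generating set $L$, and that $\pi$ is the restriction to $\mathcal{X}$ of the linear projection onto the last $p$ coordinates. The key structural observation is that $\pi$ is a semigroup homomorphism: if $y, y' \in \mathcal{X} \subseteq \N^{q+p}$, then $y + y' \in \mathcal{X}$ (since $\mathcal{X}$ is closed under addition) and $\pi(y + y') = \pi(y) + \pi(y')$, because projection onto a fixed set of coordinates is additive. Consequently $\pi(\mathcal{X})$ is itself a subsemigroup of $\N^p$, and since the image under an additive map of a set generated by $L$ is generated by the images $\pi(L)$, the semigroup $\pi(\mathcal{X})$ is generated by $\pi(L)$.

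\textbf{The main equality to establish} is that $\pi(\mathcal{X}) = \frac{S}{d}$. First I would prove the inclusion $\pi(\mathcal{X}) \subseteq \frac{S}{d}$: for any $y = (\lambda, x) \in \mathcal{X}$, the defining system $(M \mid D)(\lambda, x)^t = 0$ gives $M\lambda^t = d x^t$, i.e. $\sum_{i=1}^q \lambda_i a_i = d x$, so $d x \in S$ and hence $\pi(y) = x \in \frac{S}{d}$. Conversely, for $\frac{S}{d} \subseteq \pi(\mathcal{X})$, take any $x \in \frac{S}{d}$; by definition $d x \in S$, so there exists $\lambda \in \N^q$ with $\sum_{i=1}^q \lambda_i a_i = d x$, which means $(\lambda, x)$ is a non-negative integer solution of the system, i.e. $(\lambda, x) \in \mathcal{X}$, and thus $x = \pi(\lambda, x) \in \pi(\mathcal{X})$. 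These two inclusions, together with the additivity of $\pi$ already noted, complete the argument: $\frac{S}{d} = \pi(\mathcal{X})$ is generated by $\pi(L)$.

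\textbf{I do not anticipate a serious obstacle here}, as the statement is essentially a bookkeeping translation between the defining condition $d x \in S$ and membership in the solution semigroup $\mathcal{X}$, which has already been set up in the preceding discussion. The one point requiring mild care is verifying that generation is preserved under $\pi$: an arbitrary element of $\pi(\mathcal{X})$ has the form $\pi\bigl(\sum_j \mu_j l_j\bigr)$ for some $\mu \in \N^k$, and by additivity this equals $\sum_j \mu_j \pi(l_j)$, a non-negative integer combination of the elements of $\pi(L)$; so $\pi(L)$ generates $\pi(\mathcal{X})$ as a semigroup. Note that the lemma only claims $\pi(L)$ is a generating set, not necessarily a \emph{minimal} one, so I would not need to argue minimality (indeed $\pi$ may collapse distinct generators or produce redundant ones), which keeps the proof short.
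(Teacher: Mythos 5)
Your proof is correct and follows essentially the same route as the paper: both arguments identify $\frac{S}{d}$ with the projection $\pi(\mathcal{X})$ of the solution semigroup $\mathcal{X}$, lift elements of $\frac{S}{d}$ to solutions $(\lambda,x)$, expand them in the minimal generating set $L$, and use additivity of $\pi$ to conclude. Your write-up is merely a more explicit organization (isolating the equality $\pi(\mathcal{X})=\frac{S}{d}$ and the homomorphism property) of what the paper states tersely.
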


\begin{proof}
Get $x\in \frac{S}{d}$, so $dx=\sum_{i=1}^q\lambda_i a_i$ for some $\lambda_1,\ldots,\lambda_q\in \N$. Hence, $(\lambda , x) \in \mathcal{X}$, and there exist $\alpha_1,\ldots ,\alpha _k\in \N$ such that $(\lambda , x)=\sum _{i=1}^k \alpha_i l_i$. Thus, $x=\pi(\sum _{i=1}^k \alpha_i l_i)$. Analogously, if $x$ can be obtained from $\pi (L)$ by a non-negative linear combination of its elements, then $x\in \frac{S}{d}$.
\end{proof}

Algorithm \ref{computeCociente} efficiently computes the minimal generating set of the quotient of any affine semigroup $S$ by a positive integer $d$. We denoted by $I_p$ the identity matrix of dimension $p$.

\begin{algorithm}[H]
\caption{Computing the minimal generating set of $\frac{S}{d}$.}\label{computeCociente}
\KwIn{$A=\{a_1,\ldots ,a_q\}$, the minimal generating set of $S$.}
\KwOut{A minimal generating set of $\frac{S}{d}$.}

$M \leftarrow (a_1\vert \cdots \vert a_q)$\;
$D \leftarrow -dI_p$\;
$L \leftarrow$ the minimal generating set of the $\N$-solutions of $(M\vert D) (\lambda , x)^t = 0$\;
$L \leftarrow \pi(L)$\;
$\mathcal L \leftarrow$ remove non-minimal elements belonging to $L$\;
\Return{$\mathcal L$}
\end{algorithm}

We begin by defining convex body semigroups, a family closed under the quotient by any positive integer. Given a convex body $F$ in $\R^p_{\geq}$ (a compact convex subset with non-empty interior), we define $\CaC(F)$ as the non-negative integer cone spanned by $F$. This cone consists of all elements in $\N^p$ that can be expressed as a non-negative rational linear combination of elements in $F$.
The definition of cone spanned by a set can be extended to every subset of $\R^p_\geq$.

Furthermore, the set $\mathcal{B}(F) = \bigcup_{i=0}^{\infty} iF \cap \N^p$ is an affine semigroup if and only if $F \cap \tau \cap \Q^p \neq \emptyset$ for any extremal ray of the rational cone spanned by $F$ (see \cite{HHT}). In this work, we assume every considered convex body satisfies this property. While not always $\CaC(F)$-semigroups (see \cite{G-S-V-20} and references therein), such semigroups are called {\em convex body (affine) semigroups}.

\begin{lemma}
    Let $F\subset \R^p_\ge$ be a convex body, and $d\in \N\setminus\{0\}$. Then, $\frac{\CaB(F)}{d}$ is a convex body semigroup. Moreover, $\frac{\CaB(F)}{d}=\CaB(\frac{F}{d})$.
\end{lemma}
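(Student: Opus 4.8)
The plan is to establish the displayed identity $\frac{\CaB(F)}{d}=\CaB(\frac{F}{d})$ first, by a direct unwinding of definitions, and then to read off that the common set is a convex body semigroup. I begin by identifying $\frac{F}{d}$: for a convex body $F\subseteq\R^p_\ge$ and $d\in\N\setminus\{0\}$, one has $\{x\mid dx\in F\}=\frac1d F=\{\frac1d f\mid f\in F\}$, because $dx\in F$ is equivalent to $x\in\frac1d F$ (and $dx\ge 0$ forces $x\ge 0$). As the image of $F$ under the homothety of ratio $1/d$, the set $\frac{F}{d}=\frac1d F$ is again compact, convex and of non-empty interior, hence a convex body.

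Next I would prove the set equality pointwise. Fix $x\in\N^p$. By the definition of the quotient, $x\in\frac{\CaB(F)}{d}$ precisely when $dx\in\CaB(F)$, that is, when $dx\in jF$ for some $j\in\N$; dividing by $d$ this is equivalent to $x\in\frac{j}{d}F=j\cdot\frac1d F$ for some $j\in\N$. Since $x$ already lies in $\N^p$, this is exactly the statement $x\in\bigcup_{j=0}^\infty\left(j\cdot\frac1d F\right)\cap\N^p=\CaB(\frac1d F)$. Concretely,
\[
x\in\frac{\CaB(F)}{d}\iff dx\in\CaB(F)\iff \exists\,j\in\N:\ x\in\frac{j}{d}F\iff x\in\CaB\!\left(\frac{F}{d}\right),
\]
which gives the claimed identity.

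It then remains to check that $\CaB(\frac{F}{d})$ is a convex body semigroup, i.e.\ that $\frac1d F$ meets the admissibility hypothesis standing throughout the section: $\frac1d F\cap\tau\cap\Q^p\neq\emptyset$ for every extremal ray $\tau$ of the rational cone it spans. The rational cone spanned by $\frac1d F$ equals the one spanned by $F$ (a homothety fixes the cone and therefore its extremal rays), so given $q\in F\cap\tau\cap\Q^p$ the point $\frac1d q$ lies in $\frac1d F$, in $\tau$ (a cone is stable under positive scaling), and in $\Q^p$; hence the intersection is non-empty for each $\tau$. By \cite{HHT} this makes $\CaB(\frac{F}{d})$ an affine semigroup, and being of the form $\CaB(\cdot)$ for the convex body $\frac{F}{d}$ it is a convex body semigroup. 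Combined with the identity of the previous paragraph, $\frac{\CaB(F)}{d}$ is a convex body semigroup. I do not anticipate a genuine obstacle here: the argument is essentially a definition chase, and the only point requiring care is the transfer of the extremal-ray condition from $F$ to $\frac1d F$, handled as above.
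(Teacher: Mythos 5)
Your proof is correct and takes essentially the same route as the paper: both are a direct definition chase showing $x\in\frac{\CaB(F)}{d}\iff dx\in iF$ for some $i\in\N$, hence $\frac{\CaB(F)}{d}=\bigcup_{i=0}^{\infty}\frac{i}{d}F\cap\N^p=\CaB\!\left(\frac{F}{d}\right)$. Your extra verification that $\frac{1}{d}F$ inherits the extremal-ray condition of \cite{HHT} (so that $\CaB(\frac{F}{d})$ is genuinely a convex body semigroup) is a point the paper silently absorbs into its standing assumption, and it is a welcome addition rather than a divergence.
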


\begin{proof}
Since $\CaB(F)$ is the convex body semigroup determined by $F$, $x\in \frac{\CaB(F)}{d}$ if and only if $dx\in \CaB(F)$, that is, $x\in \frac{i}{d}F$ for any $i\in \N$. So, $\frac{\CaB(F)}{d}=\bigcup_{i=0}^{\infty} \frac{i}{d}F\cap \N^p=\CaB(\frac{F}{d})$. That proves the lemma.
\end{proof}

The previous result implies that the set of the convex body semigroups is closed under the quotient by any positive integer. In particular, the set of the convex body semigroups generating a fixed integer cone is also closed under the quotient. We summarise these results in two corollaries.

\begin{corollary}
The set of all convex body semigroups is closed under the quotient by any integer.
\end{corollary}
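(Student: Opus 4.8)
The plan is to derive the corollary directly from the preceding lemma, since the lemma already carries the entire analytic content. First I would recall that, by definition, a set $S$ is a convex body semigroup precisely when $S = \CaB(F)$ for some convex body $F \subset \R^p_\ge$ satisfying the standing admissibility hypothesis adopted in this work, namely that $F$ meets each extremal ray of its rational cone in a rational point. Under this convention, ``closed under the quotient by any positive integer'' unwinds to the following assertion: for every convex body semigroup $S$ and every positive integer $d$, the set $\frac{S}{d}$ is again of the form $\CaB(G)$ for some admissible convex body $G$.

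Given an arbitrary convex body semigroup $S = \CaB(F)$ and a positive integer $d$, the previous lemma yields $\frac{S}{d} = \frac{\CaB(F)}{d} = \CaB\!\left(\frac{F}{d}\right)$. It therefore remains only to check that $G := \frac{F}{d}$ is itself an admissible convex body. This is the sole point requiring verification, and it is routine: the dilation $x \mapsto \frac{1}{d}x$ is a homeomorphism of $\R^p_\ge$ scaling by the positive rational $\frac{1}{d}$, so it carries the compact convex set $F$ with non-empty interior to the compact convex set $\frac{F}{d}$, again with non-empty interior, and it fixes the origin, hence preserves the rational cone spanned by $F$ together with all of its extremal rays.

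Finally I would confirm that the admissibility condition survives the scaling. If $q \in F \cap \tau \cap \Q^p$ for an extremal ray $\tau$, then $\frac{q}{d}$ is again rational and still lies on $\tau$, since a ray through the origin is invariant under positive scaling; thus $\frac{F}{d} \cap \tau \cap \Q^p \neq \emptyset$. Consequently $\frac{F}{d}$ is admissible, $\CaB\!\left(\frac{F}{d}\right)$ is a genuine convex body semigroup, and $\frac{S}{d}$ lies in the family. I do not anticipate any real obstacle here: the corollary is essentially a repackaging of the lemma, the only substantive observation being the elementary fact that dilation by $\frac{1}{d}$ preserves every piece of data defining an admissible convex body.
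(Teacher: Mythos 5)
Your proposal is correct and follows the same route as the paper, which simply deduces the corollary as an immediate consequence of the preceding lemma ($\frac{\CaB(F)}{d}=\CaB(\frac{F}{d})$). Your additional verification that $\frac{F}{d}$ remains an admissible convex body (compact, convex, non-empty interior, meeting each extremal ray in a rational point) is a detail the paper leaves implicit in the lemma's statement, and it is checked correctly.
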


\begin{corollary}
Let $\CaC$ be an integer cone and $\frak{B}$ the of the set of all of convex body semigroups such that $\CaC=\CaC(\CaB)$ for any $\CaB\in \frak{B}$. Hence, $\frak{B}$ is closed under the quotient by any integer.
\end{corollary}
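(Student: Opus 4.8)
The plan is to reduce everything to the Lemma already proved above, which states that for a convex body $F$ and a positive integer $d$ one has $\frac{\CaB(F)}{d}=\CaB(\frac{F}{d})$, and that this is again a convex body semigroup. Fix $\CaB\in\frak{B}$ and write $\CaB=\CaB(F)$ for a suitable convex body $F\subset\R^p_\ge$. By that Lemma, $\frac{\CaB}{d}=\CaB(\frac{F}{d})$ is a convex body semigroup, so the \emph{only} additional point to establish is that the quotient lives in the same family $\frak{B}$, i.e. that the spanned cone is preserved: $\CaC\bigl(\CaB(\frac{F}{d})\bigr)=\CaC=\CaC(\CaB(F))$.

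The key observation is that the rational cone generated by a subset of $\R^p_\ge$ is invariant under multiplication by a positive scalar. Since $\frac{F}{d}$ is precisely $F$ scaled by $\frac{1}{d}>0$, every non-negative rational combination of elements of $\frac{F}{d}$ is a non-negative rational combination of elements of $F$ and conversely; hence the two sets span the same cone, and therefore the same integer cone $\CaC(\frac{F}{d})=\CaC(F)$. This is the step that makes the quotient behave well with respect to the spanned cone, and it is routine once phrased in terms of cone invariance under scaling.

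It then remains to connect the cone spanned by the \emph{semigroup} $\CaB(F)$ with the cone $\CaC(F)$ spanned by $F$, namely $\CaC(\CaB(F))=\CaC(F)$, and likewise for $\frac{F}{d}$. This is the main (if mild) obstacle, and it is exactly where the standing hypothesis $F\cap\tau\cap\Q^p\neq\emptyset$ for every extremal ray $\tau$ is used: it guarantees that the extremal rays are realized by rational points of $F$, so that the integer points occurring in the dilates $iF$ fill out all of $\CaC(F)$ and no extremal direction is lost. Granting this, one chains the equalities
\[
\CaC\bigl(\CaB(\tfrac{F}{d})\bigr)=\CaC(\tfrac{F}{d})=\CaC(F)=\CaC(\CaB(F))=\CaC,
\]
which shows $\frac{\CaB}{d}\in\frak{B}$ and completes the argument.
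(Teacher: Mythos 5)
Your proposal is correct, and it is essentially the deduction the paper intends: the paper states this corollary (together with the preceding one) as an immediate consequence of the lemma $\frac{\CaB(F)}{d}=\CaB\left(\frac{F}{d}\right)$, offering no separate proof, and your argument is a fleshed-out version of exactly that reduction. The one place you diverge is the step you call the main obstacle, $\CaC(\CaB(F))=\CaC(F)$, for which you invoke the hypothesis that each extremal ray meets $F$ in a rational point. That step is sound under the paper's standing assumptions, but it can be bypassed altogether: for \emph{any} affine semigroup $T$ and any $d\in\N\setminus\{0\}$ one has $\CaC\left(\frac{T}{d}\right)=\CaC(T)$ directly from the definitions, since $T\subseteq\frac{T}{d}$ gives one inclusion, and if $x\in\frac{T}{d}$ then $x=\frac{1}{d}(dx)$ with $dx\in T$, so $x$ is a non-negative rational combination of elements of $T$, giving the other. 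This is the same observation the paper itself uses later (``the cone associated to $\frac{S}{d}$ and the cone associated to $S$ are the same'') in the Cohen--Macaulay proposition. Routing the cone equality through the semigroup rather than through the body $F$ makes the corollary independent of the finer structure of $F$ and of the rational-points-on-extremal-rays condition, whereas your chain $\CaC\bigl(\CaB(\tfrac{F}{d})\bigr)=\CaC(\tfrac{F}{d})=\CaC(F)=\CaC(\CaB(F))$ carries that extra (avoidable) dependence.
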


To introduce the Cohen-Macaulay simplicial semigroups, we need to give several definitions. For any finitely generated commutative semigroup $S$, and a field $\k$, the semigroup ring of $S$ over $\k$ is defined. This ring is determined by $\bigoplus_{m \in S} \k \chi^m$ endowed with a multiplication which is $\k$-linear and such that $\chi^m \cdot \chi^n = \chi^{m+n}$, $m$ and $n \in S$ (see \cite{referencia_de_Pilar}). 

Let $R$ be a Noetherian local ring, a finite $R$-module
$M\neq0$  is a Cohen-Macaulay module if ${\rm depth}(M)=\dim(M)$. If $R$ itself is a Cohen-Macaulay module, it is called a Cohen-Macaulay ring (see \cite{libro-C-M}). A semigroup is called Cohen-Macaulay if its associated semigroup ring $\k[S]$ is Cohen-Macaulay.

An affine semigroup $S\subset \N^p$ is simplicial if $\CaC$ has just $p$ extremal rays. We denote by $\{\tau_1,\ldots ,\tau_p\}$ the set of all extremal rays of $S$. The following result is proved in \cite{GSW}, characterising the affine simplicial semigroups.

\begin{theorem}\cite[Theorem 2.2]{GSW}\label{C-M_Rosales}
Let $S\subset \N^p$ be an affine simplicial semigroup. Then, the following conditions are equivalent:
\begin{enumerate}
\item $S$ is Cohen-Macaulay.
\item For any $a,b\in S$ with $a+n_i=b+n_j$ ($1\le i\neq j\le p$), $a-n_j=b-n_i\in S$. Where $n_i$ is an element belonging to $\tau_i\cap S$ for $i=1,\ldots ,p$.
\end{enumerate}
\end{theorem}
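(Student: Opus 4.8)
The plan is to interpret condition (1) ring-theoretically, recast it as a regular-sequence condition, and then translate that condition back into the combinatorics of $S$. First I would use simpliciality: since $\CaC$ has exactly $p$ extremal rays and $n_i\in\tau_i$, the elements $n_1,\dots,n_p$ are $\Q$-linearly independent, so $P:=\k[\chi^{n_1},\dots,\chi^{n_p}]$ is a polynomial ring in $p$ variables. Decomposing each $s\in S\subseteq\CaC$ as $s=\sum_i q_in_i$ with $q_i\in\Q_{\ge}$ and splitting off integer parts exhibits $\k[S]$ as a finitely generated $P$-module (only finitely many residues $\sum_i\{q_i\}n_i$ occur), with $\dim\k[S]=\dim P=p$. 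Hence $\chi^{n_1},\dots,\chi^{n_p}$ is a homogeneous system of parameters for the $\N^p$-graded domain $\k[S]$, and by the standard criterion that a finite module over a polynomial ring is Cohen--Macaulay exactly when it is free, equivalently when a system of parameters forms a regular sequence, condition (1) becomes
\[
\chi^{n_1},\dots,\chi^{n_p}\ \text{is a regular sequence on }\k[S].
\]

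Next I would reformulate regularity combinatorially, exploiting that $\k[S]$ is $\N^p$-graded and that all ideals involved are spanned by monomials $\chi^m$. The ideal $(\chi^{n_1},\dots,\chi^{n_{r-1}})$ is the $\k$-span of $\{\chi^m\mid m\in S,\ m-n_i\in S\text{ for some }i<r\}$, so the classes $\chi^t$ with $t-n_i\notin S$ for all $i<r$ form a $\k$-basis of $\k[S]/(\chi^{n_1},\dots,\chi^{n_{r-1}})$. Since multiplication by the homogeneous element $\chi^{n_r}$ carries monomials to monomials, it is injective on this quotient iff no basis monomial is killed, i.e. iff $t+n_r-n_i\notin S$ for all $i<r$ whenever $t-n_i\notin S$ for all $i<r$. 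A violation is precisely an equation $t+n_r=b+n_i$ with $b\in S$ and $i<r$; applying condition (2) to this pair yields $t-n_i\in S$, contradicting that $\chi^t$ is a basis element. Thus (2) forces $\chi^{n_r}$ to be a non-zero-divisor at every level (the case $r=1$ being automatic as $\k[S]$ is a domain), giving the regular sequence.

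For the converse I would argue contrapositively. If (2) fails there are $a,b\in S$ and $i\ne j$ with $a+n_i=b+n_j$ yet $a-n_j=b-n_i\notin S$. Reordering the $n$'s so that $n_i,n_j$ come first, the relation $\chi^{n_j}\cdot\chi^b=\chi^{a+n_i}\in(\chi^{n_i})$ shows that $\chi^{n_j}$ annihilates the class of $\chi^b$ in $\k[S]/(\chi^{n_i})$, while that class is nonzero because $b-n_i\notin S$; hence this ordering is not a regular sequence, so $\k[S]$ is not Cohen--Macaulay. Here I rely on the order-independent form of the equivalence: for a Cohen--Macaulay graded ring every homogeneous system of parameters is a regular sequence in every order. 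I expect the main obstacle to lie not in the clean combinatorial translation but in the two structural inputs, namely verifying that simpliciality really makes $\k[S]$ a finite $P$-module of dimension $p$ (the fundamental-domain/finiteness argument) and invoking the precise equivalence ``Cohen--Macaulay $\Leftrightarrow$ system of parameters is a regular sequence.'' It is precisely the hypothesis that $\CaC$ has just $p$ rays that forces the $n_i$ to be independent, and hence makes the multidegrees in the cancellations above unambiguous.
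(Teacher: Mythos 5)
You should first note that the paper contains no proof of this statement: it is imported verbatim as Theorem~2.2 of \cite{GSW}, so there is no internal argument to compare yours against, only the literature. Judged on its own, your proposal is correct, and it is in substance the classical proof of this characterization for simplicial semigroup rings. The two structural inputs you flag as the main risk do hold: simpliciality forces $n_1,\ldots,n_p$ to be linearly independent, so $P=\k[\chi^{n_1},\ldots,\chi^{n_p}]$ is a polynomial ring, $\k[S]$ is a finite $P$-module, hence $\chi^{n_1},\ldots,\chi^{n_p}$ is a homogeneous system of parameters, and the graded criterion (Cohen--Macaulay if and only if some, equivalently every, homogeneous system of parameters is a regular sequence, in any order) is exactly what converts condition~(1) into a regular-sequence statement; the permutability your converse relies on is part of that standard package in the graded setting. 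The combinatorial translation is also sound: the ideals $(\chi^{n_1},\ldots,\chi^{n_{r-1}})$ are homogeneous for the fine $\Z^p$-grading of $\k[S]=\bigoplus_{m\in S}\k\chi^m$, whose graded pieces are at most one-dimensional, so ideal membership of an arbitrary element reduces to membership of its monomials, and regularity of $\chi^{n_r}$ on the quotient becomes precisely the implication you verify with condition~(2); likewise, a failure of~(2) manufactures a genuine monomial zero-divisor, killing Cohen--Macaulayness.

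The one step you gloss over is the finiteness of $\k[S]$ as a $P$-module: ``only finitely many residues $\sum_i\{q_i\}n_i$ occur'' does not by itself give finite generation, since within a fixed residue class $S_\rho=\{s\in S\mid r(s)=\rho\}$ one still needs that $S_\rho$ has finitely many minimal elements under translation by $\sum_i\N n_i$ (Dickson's lemma), or, more cleanly, one argues by integrality: each algebra generator $\chi^a$ of $\k[S]$ satisfies $T^k-\chi^{ka}$ with $ka\in\sum_i\N n_i$ for a suitable $k\in\N\setminus\{0\}$, and a finitely generated algebra integral over a Noetherian subring is a finite module over it. Either patch is routine, so this is a presentational rather than a mathematical gap. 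Two further points are implicit but harmless: the $n_i$ must be nonzero (so the ideals above are proper), and the paper's local definition of Cohen--Macaulayness agrees with the graded criterion you use, since for an $\N$-graded affine $\k$-algebra with degree-zero part $\k$ it suffices to check the localization at the irrelevant maximal ideal.
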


Note that Cohen-Macaulay simplicial $\CaC$-semigroup does not exist (see \cite[Corollary 11]{G-M-V-19}).

The above theorem allows us to state a result that can obtain many Cohen-Macaulay semigroups from a given one using the quotient by a positive integer.

\begin{proposition}
Let $S$ be a Cohen-Macaulay affine simplicial semigroup. Then, the affine simplicial semigroup $\frac{S}{d}$ is Cohen-Macaulay for any positive integer $d$.
\end{proposition}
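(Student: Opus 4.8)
The plan is to apply the combinatorial characterization of Cohen--Macaulayness for simplicial semigroups (Theorem \ref{C-M_Rosales}) directly to $\frac{S}{d}$, after choosing convenient distinguished elements on each extremal ray. First I would confirm that $\frac{S}{d}$ is again simplicial with the same extremal rays as $S$. Since $S$ is closed under addition, $ds=s+\cdots+s\in S$ for every $s\in S$, so $S\subseteq\frac{S}{d}$; conversely any $x\in\frac{S}{d}$ satisfies $dx\in S\subseteq\CaC$, whence $x=\frac1d(dx)\in\CaC$. Therefore $\frac{S}{d}\subseteq\CaC$ and the cone spanned by $\frac{S}{d}$ is exactly $\CaC$, which has precisely $p$ extremal rays $\tau_1,\ldots,\tau_p$. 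Thus $\frac{S}{d}$ is simplicial, and both semigroups share the same rays.

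Next I would fix any $n_i\in\tau_i\cap S$ for each $i$. Then $d\,n_i\in\tau_i\cap S$ (a positive multiple of $n_i$ stays on the same ray and lies in $S$), while $n_i\in\tau_i\cap\frac{S}{d}$ because $d\,n_i\in S$. The idea is to invoke condition (ii) of Theorem \ref{C-M_Rosales} for $S$ with the distinguished ray elements $\{d\,n_1,\ldots,d\,n_p\}$, and to verify condition (ii) for $\frac{S}{d}$ with the elements $\{n_1,\ldots,n_p\}$.

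The verification is then a one-line rescaling. Take $a,b\in\frac{S}{d}$ with $a+n_i=b+n_j$, $i\ne j$. Multiplying by $d$ gives $(da)+d\,n_i=(db)+d\,n_j$ with $da,db\in S$. Applying condition (ii) for $S$ (which holds since $S$ is Cohen--Macaulay) with the ray elements $d\,n_i,d\,n_j$ yields $da-d\,n_j=db-d\,n_i\in S$, that is $d(a-n_j)=d(b-n_i)\in S$, and hence $a-n_j=b-n_i\in\frac{S}{d}$. This is exactly condition (ii) for $\frac{S}{d}$, so $\frac{S}{d}$ is Cohen--Macaulay.

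The one point I expect to need care is the right to apply condition (ii) of Theorem \ref{C-M_Rosales} to $S$ with the specific elements $d\,n_i$ in place of $n_i$; this relies on the characterization being independent of the chosen representatives in $\tau_i\cap S$, which it must be, as the theorem equates it with the intrinsic Cohen--Macaulay property. If instead one insists on keeping the original $n_i$ as representatives for $S$, the same goal reduces to the auxiliary claim that $a+d\,n_i=b+d\,n_j$ with $a,b\in S$ forces $a-d\,n_j=b-d\,n_i\in S$. Trying to prove this by peeling off one copy of $n_i$ at a time from the base case is the delicate part, since the intermediate differences $a-n_j$ need not lie in $S$; this obstruction is precisely why rescaling the representatives to $d\,n_i$ is the cleaner route.
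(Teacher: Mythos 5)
Your proof is correct and takes essentially the same route as the paper's: both multiply the relation $a+n_i=b+n_j$ by $d$, apply Theorem \ref{C-M_Rosales} to the Cohen--Macaulay semigroup $S$ with the rescaled ray representatives $d\,n_i\in\tau_i\cap S$, and then divide by $d$ to land back in $\frac{S}{d}$. Your explicit observation that the characterization in Theorem \ref{C-M_Rosales} is independent of the chosen representatives on each ray (since it is equated with the intrinsic Cohen--Macaulay property) is the one subtlety the paper leaves implicit, but the argument is the same.
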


\begin{proof}
Trivially, $S$ simplicial implies $\frac{S}{d}$ is also simplicial. Besides, since the cone associated to $\frac{S}{d}$ and the cone associated to $S$ are the same, $\frac{S}{d}$ is an affine semigroup (see \cite[Corollary 2.10]{Brunz}).

Get $a,b\in \frac{S}{d}$ such that $a+n_i=b+n_j$ for any $1\le i\neq j\le p$, with $n_i\in \tau_i\cap \frac{S}{d}$ and $n_j\in \tau_j\cap \frac{S}{d}$. Hence, $d(a+n_i)=d(b+n_j)$. Since $S$ is Cohen-Macaulay, Theorem \ref{C-M_Rosales} affirms that $d(a-n_j)=d(b-n_i)\in S$, we obtain that $a-n_j=b-n_i\in \frac{S}{d}$. Thus, we conclude that $\frac{S}{d}$ is Cohen-Macaulay.
\end{proof}

\section{Computing $\CaD_d(S)$ and building a tree on an arithmetic variety} 
\label{compD_dTree}
Let $S$ be a $\CaC$-semigroup and $d\in\N\setminus\{0\}$. One of the aims of this section is to describe the set of all the $\CaC$-semigroups $T$ such that $S=\frac{T}{d}$. From now on, this set is denoted by $\CaD_d(S)$, that is, a subset of any arithmetic variety containing $S$. In concordance with the notation given in \cite{RosalesAV}, an {\em arithmetic variety} is a non-empty family of affine semigroups $\CaA$ that fulfils the following conditions:
\begin{enumerate}
    \item If $G, H \in \CaA$, then $G \cap H \in \CaA$.
    \item If $G \in \CaA$ and $d \in \N \setminus \{0\}$, then $\frac{G}{d} \in \CaA$.
\end{enumerate}
Another main objective of this section is to introduce a tree of infinite and finite arithmetic varieties.

We start to provide an explicit description of the $\CaC$-semigroups belonging to $\CaD_d(S)$. Fixed $f\in \CaC$, consider $M_f=\{m\in S\setminus d\N ^p\mid m\prec f \}$, and 
\[\overline{M}_f=\Bigg\{\{\lambda_1,\ldots ,\lambda_q\}\subseteq M_f \mid \sum_{i=1}^q a_i\lambda_i\in dS\sqcup (\N^p\setminus d\N^p), \forall a_1,\ldots ,a_q\in[d-1]
\Bigg\}.\]
Note that $\overline{M}_f$ is finite. By convention, we consider that any linear combination of the elements in the empty set is zero.

\begin{theorem}\label{theorem_D_d}
Let $S$ be a $\CaC$-semigroup and a positive integer $d$. Then, the set $\CaD_d(S)$ is equal to 
\[ \left\{ T(f,\Lambda) \mid f\in \CaC \mbox{ with }f\succeq d\Fb(S),\mbox{ and } \Lambda\in \overline{M}_f\cup \{\{\emptyset\}\} \right \},\]
with 
\begin{multline*}
    T(f,\Lambda)=\{x\in \CaC\mid x\succ f\} \cup\\ \left(dS +\left\{\sum_{i=1}^{\sharp \Lambda} a_i\lambda_i\mid a_i \in [d-1]\mbox{ and } \lambda_i\in\Lambda,\, \forall i\in \{1,\ldots, \sharp \Lambda\}\right\}  \right).
\end{multline*}

\end{theorem}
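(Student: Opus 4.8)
The plan is to replace the explicit description of $T(f,\Lambda)$ by a clean intrinsic characterization of the members of $\CaD_d(S)$ and then match the two. First I would observe that $\frac{T}{d}=S$ is equivalent to $T\cap d\N^p=dS$: for $y=dx$ one has $y\in T\iff x\in S\iff y\in dS$. From this I extract the structural facts used throughout. If $t\in T$ then $dt\in T\cap d\N^p=dS$, whence $t\in S$; so every $T\in\CaD_d(S)$ satisfies $dS\subseteq T\subseteq S$ and shares the cone $\CaC$. Moreover $d\Fb(S)\notin dS=T\cap d\N^p$, so $d\Fb(S)$ is a gap of $T$ and $\Fb(T)\succeq d\Fb(S)$. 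Thus $\CaD_d(S)$ is exactly the set of $\CaC$-semigroups $T$ on the cone $\CaC$ with $T\cap d\N^p=dS$.

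The second ingredient is a reduction lemma. Writing $\Lambda=\{\lambda_1,\dots,\lambda_{\sharp\Lambda}\}\subseteq M_f\subseteq S$, I would show that the second summand in $T(f,\Lambda)$ is the generated sub-semigroup, namely
\[
dS+\Big\{\textstyle\sum_i a_i\lambda_i\mid a_i\in[d-1]\Big\}=\langle dS\cup\Lambda\rangle.
\]
The inclusion $\subseteq$ is clear; for $\supseteq$, splitting a coefficient $c_i=dq_i+a_i$ with $a_i\in[d-1]$ and using $\lambda_i\in S$ places $d\sum_i q_i\lambda_i$ in $dS$, so every element reduces to the stated form. For the forward inclusion, each $T(f,\Lambda)$ with $f\succeq d\Fb(S)$ and $\Lambda\in\overline{M}_f$ then lies in $\CaD_d(S)$: closure under addition is routine since any sum involving an element $\succ f$ stays $\succ f$ (every element of $\CaC$ is $\succeq0$), and $\langle dS\cup\Lambda\rangle$ is already a semigroup. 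Cofiniteness follows because $T(f,\Lambda)\supseteq\{x\in\CaC\mid x\succ f\}$ and, $\preceq$ having finite initial segments, $\{x\in\CaC\mid x\preceq f\}$ is finite, which also forces the cone to be $\CaC$. Finally $T(f,\Lambda)\cap d\N^p=dS$: a divisible element $dy\succ f\succeq d\Fb(S)$ forces $y\succ\Fb(S)$ by compatibility and cancellation of $\preceq$, hence $dy\in dS$; and $\langle dS\cup\Lambda\rangle\cap d\N^p=dS$ is precisely the defining condition of $\overline{M}_f$, since a combination $\sum a_i\lambda_i\in d\N^p\setminus dS$ would violate it (take $s=0$), while the condition lets one divide by $d$ inside $S$.

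For the converse, given $T\in\CaD_d(S)$ I would set $f=\Fb(T)\succeq d\Fb(S)$ and $\Lambda=(T\setminus d\N^p)\cap\{x\prec f\}$. As $T\subseteq S$ we get $\Lambda\subseteq M_f$, and $\langle dS\cup\Lambda\rangle\subseteq T$ gives $\langle dS\cup\Lambda\rangle\cap d\N^p\subseteq dS$, so $\Lambda\in\overline{M}_f$. The identity $T=T(f,\Lambda)$ is checked elementwise: above $f$ both sets equal $\{x\in\CaC\mid x\succ f\}$ by definition of $\Fb(T)$; a divisible element of $T$ below $f$ lies in $dS$; and a non-divisible element of $T$ below $f$ lies in $\Lambda$ by construction. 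Conversely, any non-divisible element of $\langle dS\cup\Lambda\rangle$ below $f$ is an element of $T$ that is non-divisible and $\prec f$, hence already in $\Lambda$; this closedness of $\Lambda$ makes the recovery exact.

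I expect the main obstacle to be exactly this last point: proving that the finite gadget $\Lambda$ recovered from $T$ regenerates $T$ with no surplus, i.e.\ that $\langle dS\cup\Lambda\rangle$ produces no new non-divisible element below $f$ beyond $\Lambda$ itself. The supporting delicate steps are the order-cancellation property $dy\succ d\Fb(S)\Rightarrow y\succ\Fb(S)$ and the finiteness of the initial segments of $\preceq$ needed for cofiniteness of $T(f,\Lambda)$.
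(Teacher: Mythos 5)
Your proof is correct, and it reaches the theorem by the same overall skeleton as the paper---a double inclusion in which the converse direction uses exactly the same witnesses, $f=\Fb(T)$ and $\Lambda=M_f\cap T$ (your $(T\setminus d\N^p)\cap\{x\prec f\}$ equals $M_f\cap T$ because $T\subseteq S$)---but you repackage the verifications through two intermediate statements that the paper never isolates: (i) $S=\frac{T}{d}$ if and only if $T\cap d\N^p=dS$, so that $\CaD_d(S)$ is intrinsically the set of $\CaC$-semigroups with $dS\subseteq T\subseteq S$ and $T\cap d\N^p=dS$; and (ii) the second summand in $T(f,\Lambda)$ is the sub-semigroup $\langle dS\cup\Lambda\rangle$ generated by $dS\cup\Lambda$. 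Statement (ii) is the genuine gain: closure of $T(f,\Lambda)$ under addition becomes automatic (the paper instead runs a hands-on case analysis, and even invokes the $\overline{M}_f$ condition there, where only $\Lambda\subseteq S$ is actually needed), and the defining condition of $\overline{M}_f$ is exposed as being exactly the equality $\langle dS\cup\Lambda\rangle\cap d\N^p=dS$, which then serves both directions of the proof. You also close a small gap the paper passes over: checking that $\Fb(T)\succeq d\Fb(S)$ for every $T\in\CaD_d(S)$ (since $d\Fb(S)$ is a gap of $T$), which is needed for the recovered pair $(f,\Lambda)$ to fall within the stated parametrization. Incidentally, the ``main obstacle'' you anticipate at the end is not one: once $\langle dS\cup\Lambda\rangle\subseteq T$ is noted, both inclusions of $T=T(f,\Lambda)$ follow from your elementwise classification with no extra work. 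One shared caveat: your cofiniteness argument assumes $\{x\in\CaC\mid x\preceq f\}$ is finite; this does not follow from the paper's stated hypotheses on $\preceq$ (the lexicographic order satisfies them yet has infinite initial segments), but the paper's own proof makes the same tacit assumption when it claims that $\CaC\setminus T(f,\Lambda)$ and $\overline{M}_f$ are finite, so you are no worse off---and you at least flag it explicitly, whereas the paper does not.
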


\begin{proof}
Fix $f\in \CaC$ with $f\succ d\Fb(S)$,  and $\Lambda=\{\lambda_1,\ldots ,\lambda_q\}\in \overline{M}_f$. Let us prove that $T(f,\Lambda)$ is a $\CaC$-semigroup. Since $0=d0+\sum_{i=1}^q 0\lambda_i$, we have $0\in T(f,\Lambda)$. Let $x,y\in \{x\in \CaC\mid x\succ f\}$, and $z=2s+\sum_{i=1}^{q} a_i\lambda_i$ for some $s\in S$ and $a_1,\ldots, a_q\in[d-1]$, note that, $x+y,x+z,y+z\in \{x\in \CaC\mid x\succ f\}$. Consider $x,y\in T(f,\Lambda)$ such that $x,y\prec f$, therefore, $x=ds+\sum_{i=1}^{q} a_i\lambda_i$ and $y=ds'+\sum_{i=1}^{q} a'_i\lambda_i$ with $s,s'\in S$ and $a_1,a'_1\ldots, a_q,a_q'\in[d-1]$. Since $\Lambda\in \overline{M}_f$, we can express $x+y=d(s+s')+ds''+\sum_{i=1}^{q} a''_i\lambda_i$ for some $a''_1,\ldots,a''_q\in[d-1]$ and $s''\in S$. So, we deduce that  $T(f,\Lambda)$ is closed under addition. Observe that, $\{x\in \CaC\mid x\succ f\}\subset  T(f,\Lambda)$, thus $\CaC\setminus T(f,\Lambda)$ is finite. Hence, $T(f,\Lambda)$ is a $\CaC$-semigroup.

Since $dS \subset  T(f,\Lambda)$, we obtain $S\subset \frac{T(f,\Lambda)}{d}$. Let $x\in \frac{T(f,\Lambda)}{d}$, by definition $dx\in T(f,\Lambda)$. If $dx\succ f\succ d\Fb(S)$, then $x\succ \Fb(S)$, and $x\in S$. In case that $dx=ds+\sum_{i=1}^{q} a_i\lambda_i$ for any $s\in S$ and $a_1,\ldots, a_q\in[d-1]$, and taking into account that $\Lambda\in \overline{M}_f$, we deduce $\sum_{i=1}^{q} a_i\lambda_i\in dS$. Therefore, $S=\frac{T(f,\Lambda)}{d}$. 

Let $T$ be a $\CaC$-semigroup such that $S=\frac{T}{d}$, and consider  $\Lambda=M_f\cap T=\{\lambda_1,\ldots ,\lambda_q\}$ with $f=\Fb(T)$. We now prove that $T=T(f,\Lambda)$. Note that it is equivalent to show that $\Lambda \in \overline{M}_f$. Take some $a_1,\ldots, a_q\in[d-1]$. In case that $\sum_{i=1}^{q} a_i\lambda_i\notin d\N^p$, we hold the result. Otherwise, assume that $\sum_{i=1}^{q} a_i\lambda_i\in d\N^p\setminus dS$. Thus, $\frac{\sum_{i=1}^{q} a_i\lambda_i}{d}\notin S$. Since $S=\frac{T}{d}$, it follows that $d\frac{\sum_{i=1}^{q} a_i\lambda_i}{d}\notin T$, contradicting that $\Lambda \subset T$.
\end{proof}

Observe that $T(f,\emptyset)= \{x\in \CaC\mid x\succ f\} \cup dS$. Moreover, if $f=d\Fb(S)$, then $T(f,\emptyset)$ is the one with the largest genus of the $\CaC$-semigroups in $\CaD_d(S)$ with Frobenius element equal to $d\Fb(S)$.

From Theorem \ref{theorem_D_d}, the set $\CaD_d(S)$ is not finite  since $f$ can be any element belonging to $\CaC$ greater than $d\Fb(S)$. But, fixed an element $f\succ d\Fb(S)$, the set $\CaD_d(S,f)=\{T\in \CaD_d(S)\mid \Fb(T)\preceq f\}$ is finite. In particular, $\CaD_d(S,f)$ is equal to $\{T(f,\Lambda)\mid \Lambda \in \overline{M}_f \cup  \{\{\emptyset \}\} \}$.

\begin{example}\label{ex_cociente_d}
Consider the $\CaC$-semigroup minimally generated by
\begin{equation}\label{semigrupo_ejemplo1}
\{(4, 1), (5, 2), (7, 2), (9, 5), (4, 2), (6, 2), (6, 3), (7, 3), (11, 6)\}.\end{equation}
This semigroup is the non-negative integer cone $\CaC$ determined by the extremal rays $\{(4,1),(9,5)\}$ except the points $\{(2,1),(3,1)\}$ (see Figure \ref{fig1_ex_cociente_d}).
\begin{figure}[h]
    \centering
   \includegraphics[scale=.3]{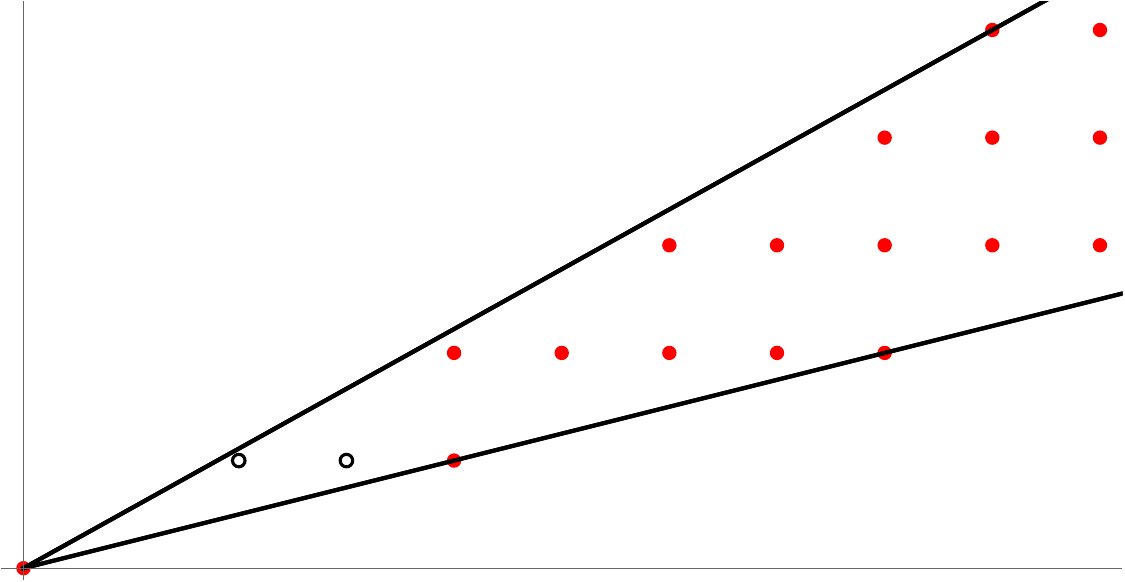}
     \caption{ $\CaC$-semigroup minimally generated by \eqref{semigrupo_ejemplo1}.}
    \label{fig1_ex_cociente_d}
\end{figure}

Fixed $d=3$, $f=3\Fb(S)=(9,3)$, and the reverse graded lexicographical order (see \cite{Cox}), we obtain that the set $M_f$ is equal to
$$\{(4,1),(4,2), (5, 2), (6,2), (7, 2), (7,3), (8, 2), (8,3)\},$$
and $T(f,\emptyset)$ is the $\CaC$-semigroup showed in Figure \ref{fig2_ex_cociente_d}.
\begin{figure}[h]
    \centering
   \includegraphics[scale=.3]{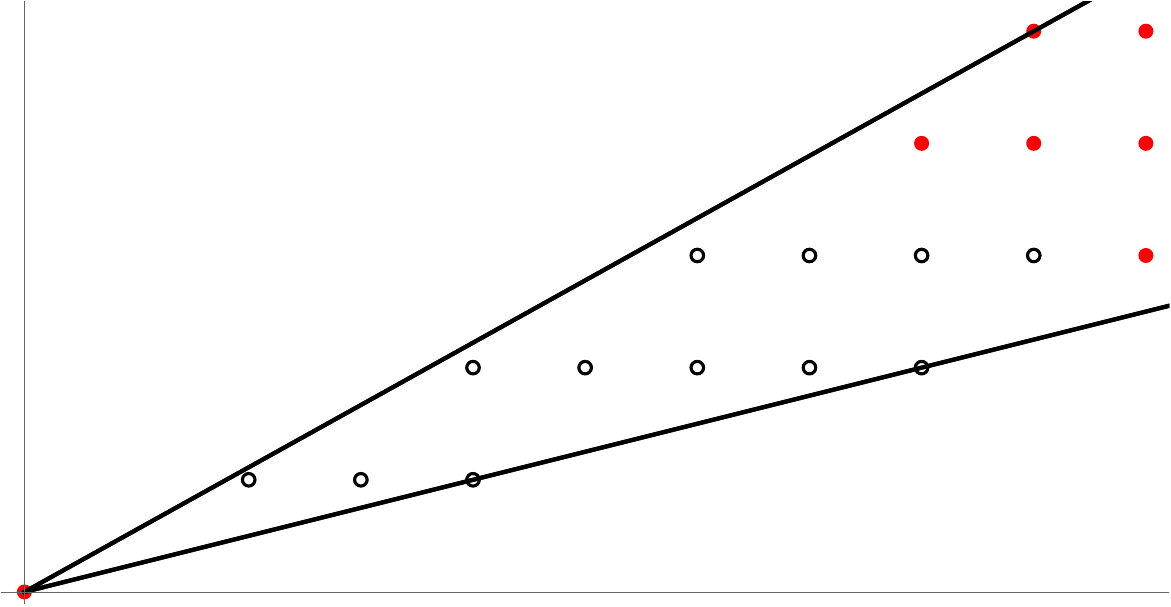}
     \caption{ $T(f,\emptyset)$.}
    \label{fig2_ex_cociente_d}
\end{figure}
Moreover, the cardinality of $\CaD_3(S,f)$ is bounded by $\sum_{i=0}^8 \binom{8}{i}=256$, which is the amount of all the possible elements in $\overline{M}_f$ plus one.

Since, $(4,1)+(5,2)\in 3\N^2$, no element in $\overline{M}_f$ can contain $\{(4,1),(5,2)\}$, and, taking into account $(4,1)+(4,2), 2(4,1)\in M_f$, we have to remove those redundant $\CaC$-semigroups. For example, $T(f,\{(4,1)\})=T(f,\{(4,1),(8,2)\})$. So, we obtain the cardinality of $\CaD_3(S,f)$ is just equal to $151$. Figure \ref{table_ex_cociente_d} shows some graphical examples of elements belonging to $\CaD_3(S,f)$.
\begin{figure}[h]
    \centering
$\begin{array}{cc}
       \includegraphics[scale=.25]{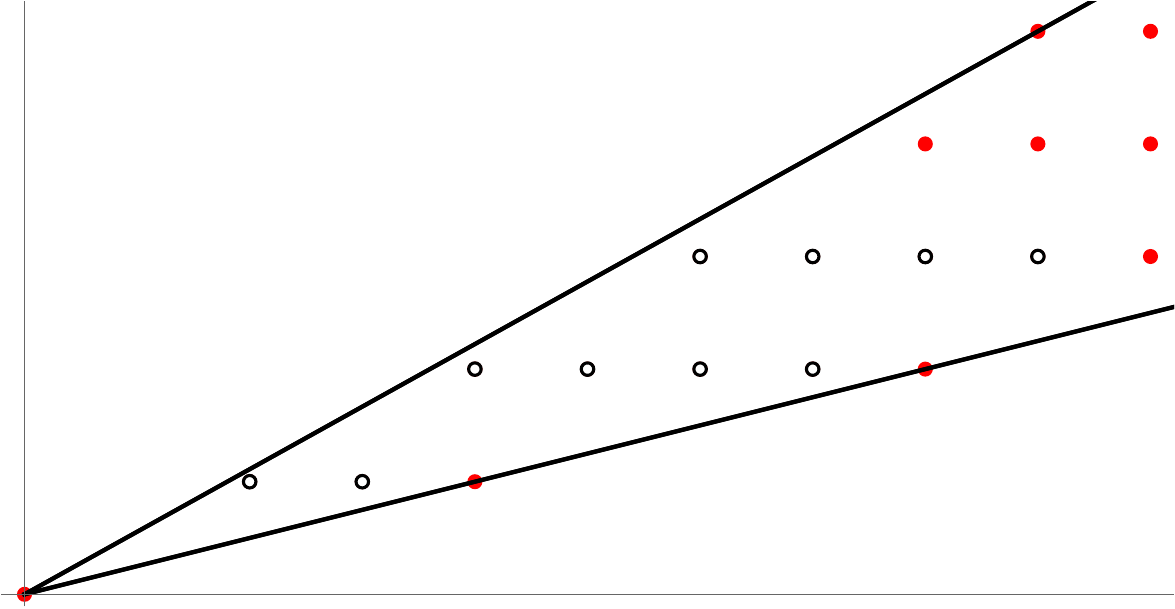}  & \includegraphics[scale=.25]{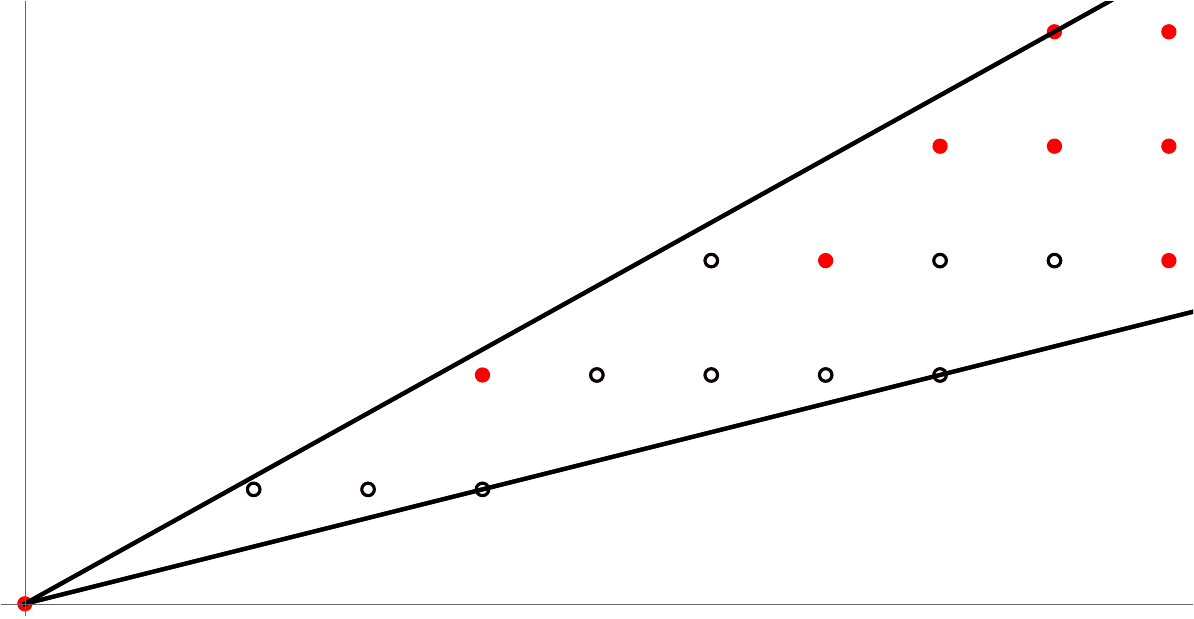} \\
       \Lambda=\{(4,1)\} & \Lambda=\{(4,2),(7,3)\} \\ 
       \\
        \includegraphics[scale=.25]{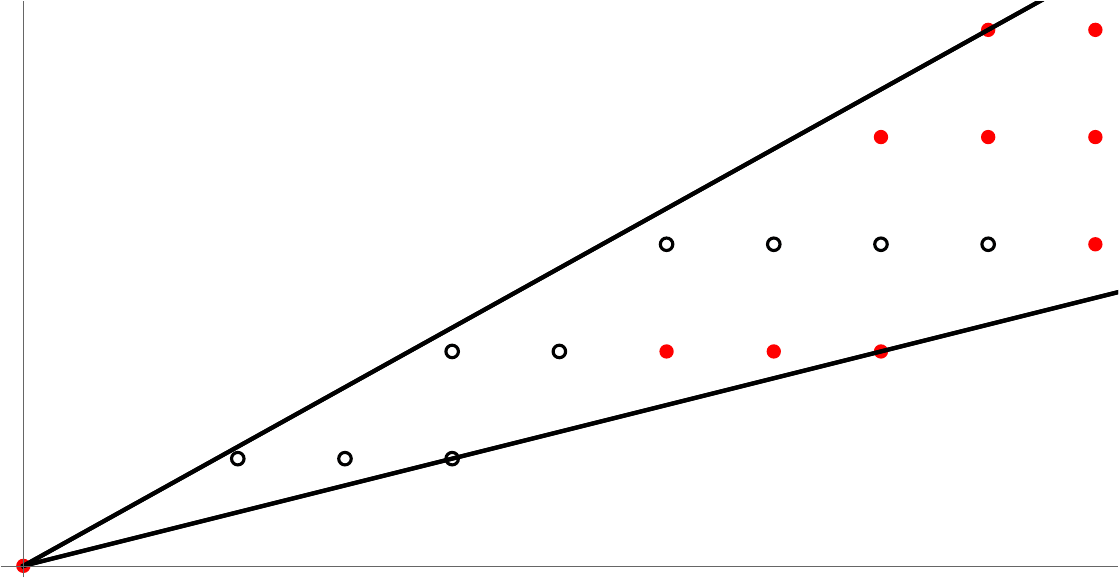} & \includegraphics[scale=.25]{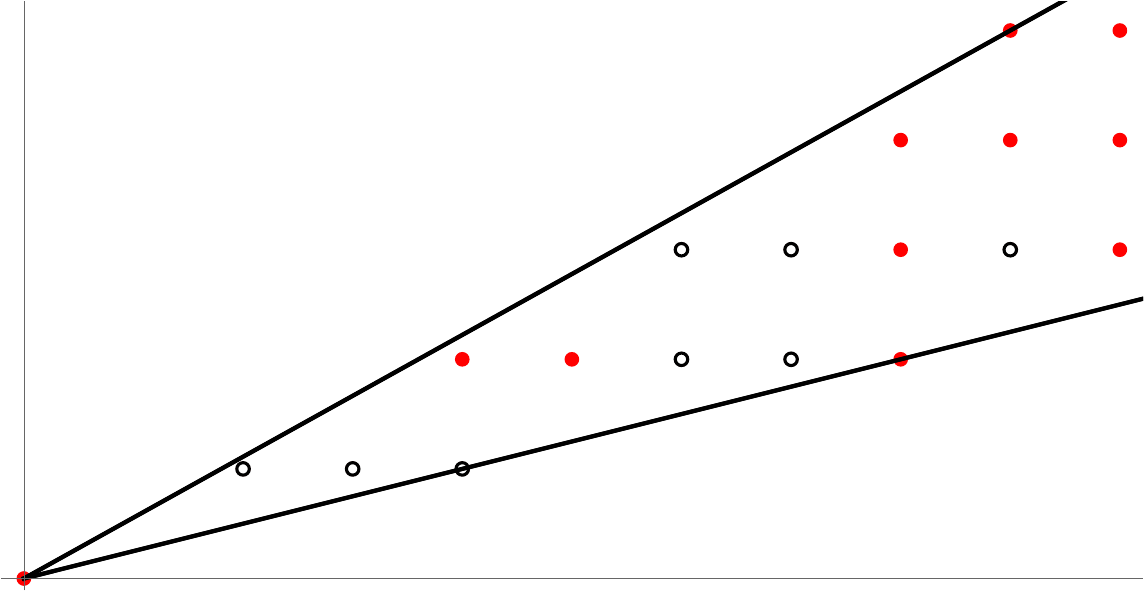} \\
        \Lambda=\{(6,2),(7,2),(8,2)\} & \Lambda=\{(4,2),(5,2),(8,2),(8,3)\} \\ 
\end{array}$
    \caption{Some elements in $\CaD_3(S)$.}
    \label{table_ex_cociente_d}
\end{figure}

\end{example}

This section ends by introducing a tree on an arithmetic variety of $\CaC$-semigroups. Consider $\CaA$ an arithmetic variety, we can build the tree $G_{\CaA,d}$ with root $\CaC$, whose vertex set is the arithmetic variety, and $(T, S)\in \CaA^2$ is a directed edge if and only if $S=\frac{T}{d}$. It is more when $(T, S)$ is an edge, we say that $T$ is a child of $S$. 

\begin{lemma}
\label{tree}
For any $\CaA$ arithmetic variety of $\CaC$-semigroups, $G_{\CaA,d}$ is a tree with root $\CaC$. Furthermore, the set of children of any $S\in \CaA$ is the set $\CaD_d(S)\cap \CaA$.
\end{lemma}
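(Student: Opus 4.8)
The plan is to realize $G_{\CaA,d}$ as a rooted in‑tree governed by the single ``parent'' map $p\colon \CaA\to\CaA$, $p(T)=\frac{T}{d}$, and to show that iterating $p$ drives every vertex to $\CaC$ in finitely many steps. First I would record that $p$ is well defined: since $\CaA$ is an arithmetic variety, $\frac{T}{d}\in\CaA$ for every $T\in\CaA$, and, as the cone of $\frac{T}{d}$ coincides with the cone of $T$, every member of $\CaA$ is a $\CaC$-semigroup for the same fixed cone $\CaC$. By definition of the edges, $(T,S)$ is an edge exactly when $S=\frac{T}{d}=p(T)$, so each vertex $T$ has precisely one outgoing edge, namely $(T,p(T))$. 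Hence the parent of $T$ is uniquely determined, and the ``furthermore'' clause is immediate: using the description of $\CaD_d(S)$ from Theorem \ref{theorem_D_d}, $T$ is a child of $S$ iff $S=\frac{T}{d}$ and $T\in\CaA$, i.e.\ iff $T\in\CaD_d(S)\cap\CaA$.

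The heart of the argument is a monotonicity statement. I would first note that $T\subseteq p(T)$, because $x\in T$ forces $dx\in T$ and hence $x\in\frac{T}{d}$; as both $T$ and $p(T)$ are $\CaC$-semigroups in the same cone, this gives $\CaH(p(T))\subseteq\CaH(T)$, so the genus is non‑increasing along every edge. Next I would show that for $d\ge 2$ the inclusion is strict unless $T=\CaC$: if $T\neq\CaC$, pick a gap $h\in\CaC\setminus T$ with $h\neq 0$; were $p(T)=T$ we would have $dx\in T\Rightarrow x\in T$, whence $dh\notin T$, and inductively $d^{k}h\notin T$ for all $k$, producing the infinitely many distinct gaps $h,dh,d^{2}h,\dots$ and contradicting the finiteness of $\CaC\setminus T$. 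Consequently $p(T)=T$ iff $T=\CaC$, so $\CaC$ is the unique fixed point of $p$, and for every $T\neq\CaC$ the genus of $p(T)$ is strictly smaller than that of $T$.

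With this in hand the tree structure follows. Starting from any $T\in\CaA$, the sequence $T,p(T),p^{2}(T),\dots$ has strictly decreasing genus until it reaches a fixed point, so after finitely many steps it stabilises at the only fixed point $\CaC$; in particular $\CaC\in\CaA$, so the proposed root genuinely lies in the vertex set. This produces, for each vertex, a unique directed path to $\CaC$ obtained by following the unique outgoing edges, and since the genus strictly decreases along every edge other than the loop $(\CaC,\CaC)$ at the root (which we discard, the root having no parent), there can be no directed cycle. A graph in which every vertex has a unique path to a common vertex and which is acyclic is a tree rooted at that vertex; hence $G_{\CaA,d}$ is a tree with root $\CaC$.

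I expect the main obstacle to be the strict‑monotonicity step, namely proving $p(T)\supsetneq T$ for $T\neq\CaC$: this is exactly where the finiteness of $\CaC\setminus T$ and the hypothesis $d\ge 2$ are indispensable (for $d=1$ the map $p$ is the identity and the statement degenerates into a family of loops), and it is what simultaneously yields acyclicity, termination at the root, and the membership $\CaC\in\CaA$ needed to make the root well defined.
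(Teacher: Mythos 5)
Your proof is correct and follows essentially the same route as the paper's: both iterate the parent map $T \mapsto \frac{T}{d}$ and use the finiteness of $\CaC \setminus T$ to show that this iteration reaches $\CaC$ in finitely many steps, with uniqueness of paths coming from each vertex having exactly one outgoing edge. You spell out details the paper leaves implicit (strictness of the ascending chain for $d \geq 2$, the membership $\CaC \in \CaA$, the loop at the root, and the degenerate case $d=1$), which strengthens rather than changes the argument.
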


\begin{proof}
 Let $S\in \CaA$ such that $S\ne \CaC$. Consider the sequence $\{S_i\}_{i\in \N}$ defined by $S_i=\frac{S}{d^i}$. Since the complement of $S$ in $\CaC$ is finite, there exists a positive integer $n$ such that $S=S_0\subset S_1\subset \cdots \subset S_n=\CaC$. Therefore, for every vertex $S\in\CaA$, a path exists connecting $S$ and $\CaC$. The uniqueness of the path is given by its definition.
\end{proof}

Note that the graph $G_{\CaA,d}$ could be infinite. If $d=2$, and we consider the set of all numerical semigroups as the arithmetic variety, this result is known (see \cite{OR2020}). 

Given $f\in \CaC$,  take the set $\CaA_f=\{S\in \CaA\mid \Fb(S)\preceq f\}$. Let $S$ and $T$ be two $\CaC$-semigroups and $d$ be a positive integer. Observe that $\Fb(S\cap T)=\max_\preceq\{\Fb(S),\Fb(T)\}$, and, taking into account that $S\subseteq \frac{S}{d}$, it follows $\Fb(\frac{S}{d})\preceq\Fb(S)$. From the previous considerations, we deduce that $\CaA_f$ is an arithmetic variety. Since a finite amount of $\CaC$-semigroups with Frobenius element less than or equal to $f$ exists, $\CaA_f$ is also finite. Therefore, we can fully construct a graph $G_{\CaA_f,d}$. In view of Lemma \ref{tree}, to build the graph $G_{\CaA_f,d}$,
it is enough to compute $\CaD_d(S)\cap \CaA_f=\CaD_d(S,f)\cap \CaA=\{T(f,\Lambda)\in \CaA \mid \Lambda\in \overline{M}_f\cup\{ \{ \emptyset\}\}\}$, as the following example illustrates.

\begin{example}
As Example \ref{ex_cociente_d}, let $\CaC$ be again the integer cone determined by the extremal rays $\{(4,1),(9,5)\}$, $\preceq$ the reverse lexicographical order, and $f=(4,2)$. Consider $\CaA$ the arithmetic variety consisting of all $\CaC$-semigroups, hence, $\CaA_f$ contains twelve elements: $S_0=\CaC\setminus\{\emptyset\}$, $S_1=\CaC\setminus\{(2,1)\}$, $S_2=\CaC\setminus\{(3,1)\}$, $S_3=\CaC\setminus\{(4,1)\}$, $S_4=\CaC\setminus\{(2,1),(3,1)\}$, $S_5=\CaC\setminus\{(2,1),(4,1)\}$, $S_6=\CaC\setminus\{(3,1),(4,1)\}$, $S_7=\CaC\setminus\{(2,1),(4,2)\}$, $S_8=\CaC\setminus\{(2,1),(3,1),(4,1)\}$, $S_9=\CaC\setminus\{(2,1),(3,1),(4,2)\}$, $S_{10}=\CaC\setminus\{(2,1),(4,1),(4,2)\}$, and $S_{11}=\CaC\setminus\{(2,1),(3,1),(4,1),(4,2)\}$. Figure \ref{grafo_finito} illustrates the finite graph $G_{\CaA_{(4,2)},2}$.
\begin{figure}[h]
\centering
\tikz [ my tree ]
  \node {$S_0=\CaC$}
    child {node {$S_1$}
        child {node {$S_{7}$}}
        child {node {$S_{9}$}}
        child {node {$S_{10}$}}
        child {node {$S_{11}$}}
    }
    child {node {$S_2$}}
    child {node {$S_3$}}
    child {node {$S_4$}}    
    child {node {$S_5$}}
    child {node {$S_6$}}
    child {node {$S_8$}}
;
     \caption{ $G_{\CaA_{(4,2)},2}$.
             }
    \label{grafo_finito}
\end{figure}

\end{example}

\section{Some results about irreducibility over the quotients of $\CaC$-semigroups}
\label{irre}

This section establishes two key results regarding $\CaC$-semigroups. Firstly, we demonstrate that each $\CaC$-semigroup constitutes precisely one-half of infinitely many distinct symmetric $\CaC$-semigroups. Secondly, we prove that every $\CaC$-semigroup forms one-fourth of a unique pseudo-symmetric $\CaC$-semigroup. 

Denote by $\mathcal{O}$ the set $\N\setminus2\N$ of odd non-negative integers.

Inspired by \cite[Theorem 6.7]{libroRosales}, and by the study of the irreducibility of $\CaC$-semigroups given in \cite{SomepropCsemgp}, we have the following result.

\begin{theorem}\label{thrmQuoSim}
    Let $S$ be a $\CaC$-semigroup and $f\in\CaC\cap\mathcal{O}^p$ such that $f-f_i-f_j\in S$ for all $f_i,f_j\in PF(S)=\{f_1, \ldots,f_t\}.$
    Then,
    \begin{eqnarray*}
      T&=&2S\cup\{x\in \CaC\mid f-x\notin\CaC\}\\
      &\cup & \bigcup_{i=1}^t \Big((f-2f_i)+2S\Big)\\
      &\cup&\left\{x\in\CaC\setminus(2\N^p\cup\mathcal{O}^p)\mid x \succ\tfrac{f}{2},\,f-x\in\CaC\right\}  
    \end{eqnarray*}
    is a symmetric $\CaC$-semigroup, with Frobenius element $f$, and such that $S=\frac{T}{2}.$
\end{theorem}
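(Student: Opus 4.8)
The plan is to verify the three assertions of the theorem in order: first that $T$ is a $\CaC$-semigroup, then that $\Fb(T)=f$ and $T$ is symmetric, and finally that $S=\frac{T}{2}$. The overall strategy is to reduce each claim to the characterization of symmetric $\CaC$-semigroups in Proposition~\ref{caracSIM}, so that the main work becomes understanding which elements of $\CaC$ lie in $T$. To that end I would first reorganize the definition of $T$ by the parity of the elements: the pieces $2S$ and $(f-2f_i)+2S$ consist of even elements (note $f-2f_i$ is odd coordinatewise minus even, but combined with $2S$ this needs care since $f\in\mathcal{O}^p$), while the explicit set $\{x\in\CaC\setminus(2\N^p\cup\mathcal{O}^p)\mid x\succ \tfrac f2,\ f-x\in\CaC\}$ collects the mixed-parity large elements, and $\{x\in\CaC\mid f-x\notin\CaC\}$ collects everything so large that $f-x$ escapes the cone. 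The key bookkeeping observation is that $f\in\mathcal{O}^p$ forces $f-x$ to have the opposite parity pattern to $x$ in each coordinate, which is what makes the symmetric condition $x\in T\iff f-x\notin T$ plausible.

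First I would establish $0\in T$ (from $0\in 2S$) and closure under addition. Closure is the first real computation: I would take two elements of $T$ and check their sum lies in $T$, splitting into cases according to which of the four defining pieces each summand comes from. Sums where at least one summand satisfies $f-x\notin\CaC$ are easy, since adding anything in $\CaC$ keeps us in $\{x\mid f-x\notin\CaC\}$ (using that $\CaC$ is a cone, so $f-(x+y)\notin\CaC$ once $f-x\notin\CaC$ and $y\in\CaC$). Sums of two even pieces reduce to the hypothesis $f-f_i-f_j\in S$: a product $((f-2f_i)+2s)+((f-2f_j)+2s')=2f-2f_i-2f_j+2(s+s')=2\big((f-f_i-f_j)+(s+s')\big)\in 2S$, which is exactly where the hypothesis is used. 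Finiteness of $\CaC\setminus T$ follows because $\{x\in\CaC\mid f-x\notin\CaC\}\subseteq T$ contains all sufficiently large elements. I expect the closure verification, particularly the cases mixing the explicit large-element set with the $2S$-translates, to be the main obstacle, since one must confirm the sum does not accidentally land in a forbidden parity class or below $f$.

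Next I would prove symmetry via Proposition~\ref{caracSIM}, i.e. show $x\in T\iff f-x\notin T$ for all $x\in\CaC$. The cleanest route is to prove the contrapositive in both directions by parity. If $f-x\notin\CaC$ then $x\in T$ by definition, and conversely such $x$ cannot have $f-x\in T$; so I restrict to $x$ with $f-x\in\CaC$. For mixed-parity $x$, exactly one of $x,f-x$ exceeds $f/2$ in the order $\preceq$, and the explicit third set was built precisely to include that larger one and exclude the smaller, giving the equivalence directly. For the even/odd coordinate-parity elements I would use Proposition~\ref{carcH(S)} together with the structure of $2S$ and the translates $(f-2f_i)+2S$: an even element $2x$ lies in $T$ iff $x\in S$ or $x\in (f/2 - f_i)+S$ for some $i$, and I would match this against the gap characterization of $S$ to confirm the symmetric dichotomy. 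Along the way this identifies $\Fb(T)=f$, since $f$ itself is a gap (it is odd, exceeds $f/2$, and $f-f=0$ keeps it out of the explicit set) while everything $\succ f$ lies in $T$, and $PF(T)=\{f\}$ gives symmetry.

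Finally I would compute $\frac{T}{2}=\{x\in\CaC\mid 2x\in T\}$. Since $2x$ is even, it can only meet the pieces $2S$ and $(f-2f_i)+2S$ and the set $\{x\mid f-x\notin\CaC\}$; but $f-2x\in\CaC$ is equivalent to $2x$ small, and when $2x$ is even with $2x=(f-2f_i)+2s$ one gets $f=2x-2s+2f_i\in 2\N^p$, contradicting $f\in\mathcal{O}^p$, so the translate pieces contribute no even elements. Hence $2x\in T$ reduces to $2x\in 2S$ or $2x$ large, i.e. to $x\in S$, giving $S=\frac{T}{2}$. The one point requiring care here is confirming that the large even elements $2x$ with $f-2x\notin\CaC$ already satisfy $x\in S$, which holds because such $x\succ\Fb(S)$, completing the proof.
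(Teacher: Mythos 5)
Your overall architecture matches the paper's proof (closure under addition by cases on the four pieces, symmetry via Proposition~\ref{caracSIM} with a parity case analysis, then the computation of $\frac{T}{2}$), but your final step contains a genuine gap. To get $\frac{T}{2}\subseteq S$ you must show that if $2x$ lies in the piece $\{z\in\CaC\mid f-z\notin\CaC\}$ then $x\in S$, and you justify this by claiming that $f-2x\notin\CaC$ forces $x\succ\Fb(S)$. That implication is a numerical-semigroup intuition that fails in the affine setting: non-membership of $f-2x$ in the cone is a geometric statement, not an order statement. For instance, take $\CaC=\N^2$ with a graded lexicographic order, $S=\N^2\setminus\{(1,0)\}$ (so $PF(S)=\{(1,0)\}=\{\Fb(S)\}$) and $f=(3,1)$, which satisfies the hypothesis since $f-2(1,0)=(1,1)\in S$; then $x=(0,1)$ gives $f-2x=(3,-1)\notin\CaC$ while $x=(0,1)\prec(1,0)=\Fb(S)$, so your argument establishes nothing for this $x$. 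The paper closes exactly this hole with the claim that if $x\in\CaH(S)\cup 2\CaH(S)$ then $f-x\in\CaC$, proved from Proposition~\ref{carcH(S)} (choose $f_i\in PF(S)$ with $f_i-y\in S$, where $x=2y$) together with the hypothesis $f-f_i-f_j\in S$, which yields $f-x=(f-2f_i)+2(f_i-y)\in S\subseteq\CaC$; the contrapositive then rules out $x\in\CaH(S)$ whenever $f-2x\notin\CaC$, hence $x\in S$. A telling symptom of the gap is that in your proposal the hypothesis $f-f_i-f_j\in S$ is used only for closure under addition and never in the quotient computation, whereas it is essential there: without it the inclusion $\frac{T}{2}\subseteq S$ is simply false.

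A secondary slip: in your symmetry paragraph you assert that an even element of $T$ lies in $2S$ or in ``$(f/2-f_i)+S$''. Since $f\in\mathcal{O}^p$, $f/2\notin\N^p$, and the translates $(f-2f_i)+2S$ consist entirely of elements of $\mathcal{O}^p$ (as you yourself correctly note in the last paragraph), so they contain no even elements at all. Their actual role in the symmetry proof is the opposite one: for an even gap $x$ of $T$, Proposition~\ref{carcH(S)} gives $f_i\in PF(S)$ with $2f_i-x\in 2S$, and then the odd element $f-x=(f-2f_i)+(2f_i-x)$ lands in a translate, which is how one shows $f-x\in T$.
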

\begin{proof}
We start by proving that $T$ is a $\CaC$-semigroup. To show that $T$ is closed under addition, we distinguish the following cases: 
\begin{itemize}
    \item The sum of two elements of $2S$ belongs to $2S$.
    
    \item If $x,y\in\CaC$ such that $f-x,f-y\notin\CaC$, then $f-(x+y)\notin\CaC$, otherwise $f-x=y+c\in \CaC$ for some $c\in \CaC$, which is impossible.
    
    \item For any $f_i,f_j\in PF(S)$, we have that  $(f-2f_i+2S)+(f-2f_j+2S)=2(f-f_i-f_j)+2S\subset 2S$.
    \item If $x,y\in\left\{x\in\CaC\setminus(2\N^p\cup\mathcal{O}^p)\mid x\succ\tfrac{f}{2},\,f-x\in\CaC\right\}$, then $f-(x+y)\preceq 0$, and this implies that $f-(x+y)\notin \CaC$.
    
    \item Let $x\in 2S$ and $y\in \{x\in \CaC\mid f-x\notin\CaC\}$. Hence, $x+y\in \{x\in \CaC\mid f-x\notin\CaC\}$, otherwise, there exists $c\in \CaC$ such that $f-y=x+c\in \CaC$, which it is not possible.
    
    \item For any $s\in 2S$ then $2s+((f-2f_i)+2S)\subset (f-2f_i)+2S$ for every $i\in[t]\setminus\{0\}$.
    
    \item Let $x\in 2S$ and $y\in \left\{x\in\CaC\setminus(2\N^p\cup\mathcal{O}^p)\mid x \succ\tfrac{f}{2},\,f-x\in\CaC\right\}$. So, if $f-(x+y)\notin \CaC$, then $x+y\in \{x\in \CaC\mid f-x\notin\CaC\}$. In the other case, if $f-(x+y)\in \CaC$, since $x+y\in \CaC\setminus(2\N^p\cup\mathcal{O}^p)$, and $x+y\succ y \succ f/2$, we have  $x+y \in \left\{x\in\CaC\setminus(2\N^p\cup\mathcal{O}^p)\mid x\succ\tfrac{f}{2},\,f-x\in\CaC\right\}$.
    
    \item Consider $x\in \{x\in \CaC\mid f-x\notin\CaC\}$ and $y\in \bigcup_{i=1}^t \left((f-2f_i)+2S\right) \cup \left\{x\in\CaC\setminus(2\N^p\cup\mathcal{O}^p)\mid x \succ\tfrac{f}{2},\,f-x\in\CaC\right\}$. Observe that $x+y\in \{x\in \CaC\mid f-x\notin\CaC\}$. Otherwise, $f-x=y+t\in \CaC$ for some $t\in \CaC$, which is not possible.
    
    \item If $y\in \bigcup_{i=1}^t \left((f-2f_i)+2S\right)$ and $x\in\left\{x\in\CaC\setminus(2\N^p\cup\mathcal{O}^p)\mid x \succ\tfrac{f}{2},\,f-x\in\CaC\right\}$  then, $x+y\in \{x\in \CaC\mid f-x\notin\CaC\}\cup\left\{x\in\CaC\setminus(2\N^p\cup\mathcal{O}^p)\mid x \succ\tfrac{f}{2},\,f-x\in\CaC\right\}$.
\end{itemize} 
Thus, we conclude that $T$ is a semigroup of $\N^p$. Besides, since $\CaH(T)\subset \{x\in\CaC\mid f-x\in \CaC\}$, this set is finite, and $Fb(T)=f$.

Now, let us prove that $S=\frac{T}{2}$. Trivially, $S\subset \frac{T}{2}$. For the other inclusion, we first show that if $x\in\CaH(S)\cup2\CaH(S)$, then $f-x\in \CaC$. Let $x\in \CaH(S)$. From Proposition \ref{carcH(S)} there exists $f_i\in PF(S)$ such that $f_i-x\in S$, so $f-x=f-f_i+f_i-x$. If $f-f_i\notin S$, and by hypothesis $f-f_i-f_j\in S$ with $f_j\in PF(S)$, it would lead $f-f_i=f_j+s\notin S$, for some $s\in S$, contradicting the definition of $f_j$. Hence, $f-x=f-f_i+f_i-x\in S\subseteq \CaC$. 
Analogously, if $x\in 2\CaH(S)$, then $x=2y$ with $y\in\CaH(S),$ again by Proposition \ref{carcH(S)} we obtain that $f-x=f-2f_i+2(f_i-y)\in S$. Whence, $f-x\in \CaC$. Let $x\in \frac{T}{2}$, so $2x\in T$. Since $2x\in 2\N^p$, $2x$ have to belong to $2S\cup \{x\in \CaC\mid f-x\notin\CaC\}$. If $2x\in2S$, we have done. Assume that $2x\in\CaC\setminus 2S$ and $f-2x\notin\CaC$, hence $2x\notin \CaH(S)\cup 2\CaH(S)$. Thus, $x\in S$. Therefore, $S=\frac{T}{2}.$

To prove that $T$ is symmetric, take $x\in \CaC$. By applying Proposition \ref{caracSIM}, we must show that $x\in \CaH(T)$ if and only if $f-x\in T.$ We distinguish three cases depending on the parity of $x$:
    \begin{itemize}
        \item If $x\in 2\N^p$ then as $x\notin T,$ we have that $\frac{x}{2}\notin S.$ In view of Proposition \ref{carcH(S)} there exists $f_i\in PF(S)$ such that $f_i- \frac{x}{2}\in S$. Thus,  $2f_i-x\in 2S$, hence $f-x=f-2f_i+2f_i-x\in T.$
        \item If $x\in\mathcal{O}^p$, then $f-x\in 2\N^p.$ Thus, if $f-x\notin T$, by using the preceding case, we obtain that $f-(f-x)=x\in T$. A contradiction.
        \item If $x\in\CaC\setminus\{2\N^p\cup\mathcal{O}^p\}$, then  $x\prec \frac{f}{2}$, otherwise, since $x\notin T$, we get that $f-x\in \CaC$ and $f-x\notin \CaC,$ which is impossible. Observe that  $f-x\notin 2\N^p\cup\mathcal{O}^p$ and $f-x\succ\frac{f}{2}$. Hence, as $x\notin T$, we know that $f-x\in \CaC$. Therefore, $f-x\in T$.
    \end{itemize}
Conversely, let $x\in T$, we prove that $f-x\notin T$. We distinguish four cases depending on $x$:
    \begin{itemize}
        \item Let $x=2s$ for some $s\in S$. Since $f-2s\in \mathcal{O}^p$ it follows that $f-x\notin 2S$. Besides, $f-(f-x)=x\in \CaC$. If $f-2s=f-2f_i+2s'$ for some $f_i\in PF(S)$ and $s'\in S$, then $2f_i=2s''$, with $s''=s+s'\in S$ and thus, $f_i\in S$ which is not possible. Therefore, $f-x\notin T$.
        
        \item If $x\in \{x\in \CaC\mid f-x\notin \CaC\}$, then $f-x\notin T$.
        
        \item Let $x=f-2f_i+2s$ for some $f_i\in PF(S)$ and $s\in S$, and assume that $f-x\in \CaC$. Notice that, $f-x=2f_i-2s=2(f_i-s)\notin 2S$. If $f-x=f-2f_j+2s'$ for some $s'\in S$ and $f_j\in PF(S)$, then $x\in 2\N^p$, but $x=f-2f_i+2s\notin 2\N^p$. Considering that $f-x\in 2\N^p$, we conclude that $f-x\notin T$.
        
        \item If $x\in\left\{x\in\CaC\setminus(2\N^p\cup\mathcal{O}^p)\mid x \succ\tfrac{f}{2},\,f-x\in\CaC\right\}$, then $f-x\in \CaC\setminus (2\N^p\cup\mathcal{O}^p)$ and $f-x\prec \frac{f}{2}$. Notice that if $f-x=f-2f_i+2s$ for some $f_i\in PF(S)$ and $s\in S$, we obtain that $x=2f_i-2s$, which contradicts the fact that $x\notin 2\N^p$.
    \end{itemize}
\end{proof}

\begin{remark}\label{remarkInfSim}
Due to Theorem \ref{thrmQuoSim}, we can choose infinitely many $T$ for every semigroup $S$. This fact implies that there exist infinitely many symmetric $\CaC$-semigroups $T$ such that $S =\frac{T}{2}$, which is also true to numerical semigroups (\cite[Corollary 6.8]{libroRosales}).
\end{remark}

The following example provides a symmetric $\CaC$-semigroup obtained from Theorem \ref{thrmQuoSim} for the $\CaC$-semigroup minimally generated by $\eqref{semigrupo_ejemplo1}$.

\begin{example}\label{ex_cociente_2_simetrico}
Let $S$ be the $\CaC$-semigroup given in Example \ref{ex_cociente_d}, and consider $f=(13,5)\in \CaC\cap\mathcal{O}^2$. Trivially, the set $PF(S)=\{f_1=(2,1),f_2=(3,1)\}$, and $f-2f_1,f-2f_2,f-f_1-f_2\in S$. Figure \ref{figure_ex_cociente_2_simetrico} shows the symmetric $\CaC$-semigroup $T$ obtained from Theorem \ref{thrmQuoSim} for $S$ and $f$. This $\CaC$-semigroup is minimally generated by
{\small
\begin{multline*}
\{(6, 3), (7, 3), (8, 2), (8, 3), (8, 4), (9, 3), (9, 4), (9, 5), (10, 3), (10, 4),\\ (10, 5), (11, 3), (11, 4), (11, 5), (11, 6), (12, 3), (12, 4), (12, 5), (13, 4),\\ (13, 7), (14, 4), (15, 4)\}.
\end{multline*}
}
\begin{figure}[h]
    \centering
   \includegraphics[scale=.35]{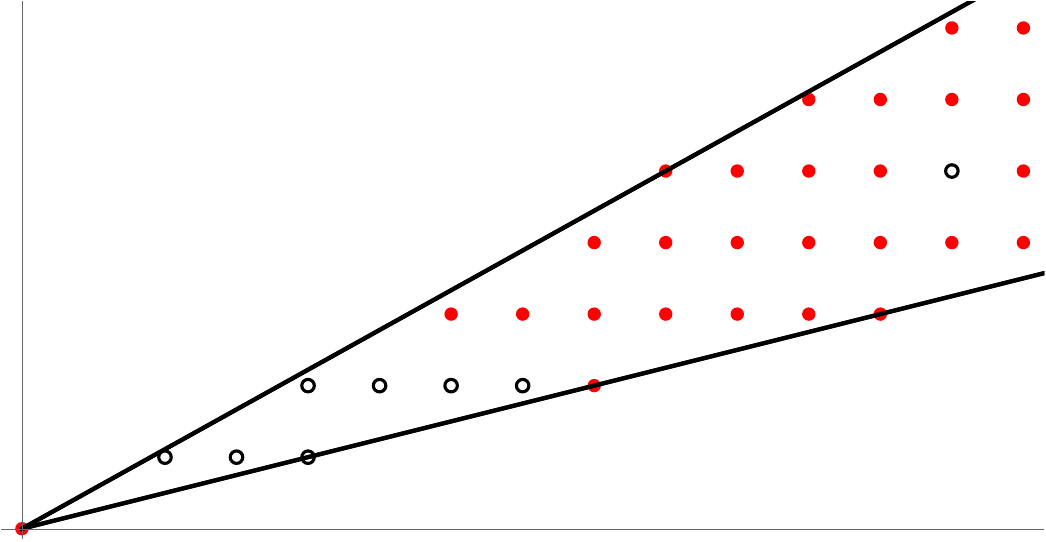}
     \caption{A symmetric semigroup $T$ with $S=\frac{T}{2}$ obtained from Theorem \ref{thrmQuoSim}.}
    \label{figure_ex_cociente_2_simetrico}
\end{figure}

\end{example}

Let us see there is not a parallelism between the symmetric and pseudo-symmetric cases. Previously, we can extend to $\CaC$-semigroups Lemma 6.9 appearing in \cite{libroRosales}.

\begin{lemma}\label{fbQuo}
    Let $S$ be $\CaC$-semigroup which Frobenius element belongs to $2\N^p$. Then
    \[
    \Fb\left(\frac{S}{2}\right) = \frac{\Fb(S)}{2}.
    \]
\end{lemma}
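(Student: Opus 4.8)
The plan is to establish the two inequalities $\Fb(\frac{S}{2}) \succeq \frac{\Fb(S)}{2}$ and $\Fb(\frac{S}{2}) \preceq \frac{\Fb(S)}{2}$ separately, using only the definition of $\Fb$ as the $\preceq$-maximum of the set of gaps together with the compatibility of $\preceq$ with addition. Write $F = \Fb(S)$. Since $F \in 2\N^p$, the element $\frac{F}{2}$ lies in $\N^p$; moreover, as $F \in \CaC$ and $\CaC$ is closed under multiplication by non-negative rationals, $\frac{F}{2} \in \CaC$. Recall as well that $\frac{S}{2}$ is again a $\CaC$-semigroup with the same associated cone $\CaC$, so that $\Fb(\frac{S}{2})$ is well defined.

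For the lower bound I would first check that $\frac{F}{2}$ is a gap of $\frac{S}{2}$. Indeed $2 \cdot \frac{F}{2} = F = \Fb(S) \notin S$, so by the very definition of the quotient $\frac{F}{2} \notin \frac{S}{2}$; since $\frac{F}{2} \in \CaC$, this yields $\frac{F}{2} \in \CaH(\frac{S}{2})$ and hence $\Fb(\frac{S}{2}) = \max_\preceq \CaH(\frac{S}{2}) \succeq \frac{F}{2}$.

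For the upper bound I would argue by contradiction, taking an arbitrary gap $x \in \CaH(\frac{S}{2})$ and showing $x \preceq \frac{F}{2}$. Suppose instead $x \succ \frac{F}{2}$. Adding $\frac{F}{2}$ and then $x$ to the strict relation $\frac{F}{2} \prec x$ gives $F = \frac{F}{2} + \frac{F}{2} \prec x + \frac{F}{2} \prec x + x = 2x$, so $2x \succ F$. On the other hand, $x \in \CaH(\frac{S}{2})$ means $2x \notin S$ with $2x \in \CaC$, that is $2x \in \CaH(S)$, whence $2x \preceq \max_\preceq \CaH(S) = F$. This contradicts $2x \succ F$. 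Therefore every gap of $\frac{S}{2}$ is $\preceq \frac{F}{2}$, which gives $\Fb(\frac{S}{2}) \preceq \frac{F}{2}$ and, combined with the lower bound, the claimed equality.

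The only delicate point is the implication $x \succ \frac{F}{2} \Rightarrow 2x \succ F$: it relies on the fact that a strict $\preceq$-inequality is preserved under adding a fixed vector, which holds because $\preceq$ is a total, addition-compatible (hence cancellative) order on $\N^p$. Everything else is a direct unwinding of the definitions of the quotient and of $\Fb$, so I expect no further obstacle; in particular the hypothesis $\Fb(S) \in 2\N^p$ is used precisely to guarantee $\frac{F}{2} \in \N^p$, so that $\frac{F}{2}$ is a legitimate candidate gap of $\frac{S}{2}$.
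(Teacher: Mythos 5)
Your proof is correct, but it takes a different route from the paper. The paper disposes of this lemma in one line by citing its Proposition~\ref{propFG} (the description $FG\left(\frac{S}{d}\right)=\left\{\frac{h}{d}\mid h\in FG(S),\ h\equiv 0 \bmod d\right\}$ of the \emph{fundamental} gaps of a quotient) together with the observation that the Frobenius element of a $\CaC$-semigroup is the $\preceq$-maximum of its fundamental gaps; the lemma then follows by taking $d=2$ and noting that halving preserves the order. You instead work directly with the full gap set $\CaH$ from first principles: $\frac{\Fb(S)}{2}$ is a gap of $\frac{S}{2}$ because it doubles to $\Fb(S)\notin S$, and any gap $x$ of $\frac{S}{2}$ doubles to a gap $2x$ of $S$, so $2x\preceq \Fb(S)$, which forces $x\preceq \frac{\Fb(S)}{2}$ by the (correctly justified) preservation of strict inequalities under addition for a total, addition-compatible order. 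What the paper's route buys is brevity and uniformity --- it reuses a result already proved for arbitrary $d$, so the lemma is genuinely a corollary; what your route buys is self-containedness --- you never need the notion of fundamental gap, nor the fact (implicit in the paper and requiring its own small argument) that the Frobenius element is itself a fundamental gap, and you make explicit the order-theoretic cancellation step that both arguments ultimately rest on.
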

\begin{proof}
It follows by Proposition \ref{propFG}, and by considering that the Frobenius element of a $\CaC$-semigroup is the maximum of the fundamental gaps.
\end{proof}

Note that if $S$ is a  $\CaC$-semigroup and T is a pseudo-symmetric $\CaC$-semigroup such that $S=\frac{T}{2}$, then from the Lemma \ref{fbQuo} we deduce that $\Fb(T) = 2\Fb(S)$ and there exist finitely many $\CaC$-semigroups with Frobenius element $2\Fb(S)$. Hence, we cannot obtain a result similar to Remark \ref{remarkInfSim} for the pseudo-symmetric case.

\begin{proposition}\label{propTpseusT/2irr}
    Let $T$ be a pseudo-symmetric $\CaC$-semigroup. Then, $\frac{T}{2}$ is an irreducible $\CaC$-semigroup.
\end{proposition}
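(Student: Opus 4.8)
The plan is to prove irreducibility of $S:=\frac{T}{2}$ directly from the characterizations of symmetric and pseudo-symmetric $\CaC$-semigroups in Propositions~\ref{caracSIM} and \ref{caracPSEUDOSIM}, showing that $S$ lands in one of the two classes according to a divisibility condition on $\Fb(T)$. Write $F=\Fb(T)$. Since $T$ is pseudo-symmetric, Proposition~\ref{caracPSEUDOSIM} supplies $F\in 2\N^p$ together with the rule $y\in T\iff\bigl(F-y\notin T\text{ and }y\neq\tfrac{F}{2}\bigr)$ for $y\in\CaC$. First I would record two preliminaries: that $S$ is indeed a $\CaC$-semigroup (it contains $T$, so $\CaC\setminus S\subseteq\CaC\setminus T$ is finite), and that $\Fb(S)=\frac{F}{2}$ by Lemma~\ref{fbQuo}.

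The core of the argument is to test the symmetric condition of Proposition~\ref{caracSIM} for $S$, namely ``$x\in S\iff\tfrac{F}{2}-x\notin S$'', using only the defining equivalence $x\in S\iff 2x\in T$ and the cone-homogeneity identity $F-2x=2(\tfrac{F}{2}-x)$. Feeding both sides through these, one obtains for every $x\in\CaC$ the two equivalences
\[
x\in S\iff\bigl(F-2x\notin T\text{ and }2x\neq\tfrac{F}{2}\bigr),\qquad \tfrac{F}{2}-x\notin S\iff F-2x\notin T.
\]
Comparing the right-hand sides shows the symmetric biconditional holds for every $x$ except possibly when $2x=\frac{F}{2}$, i.e. $x=\frac{F}{4}$. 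At that single point one invokes $\frac{F}{2}\notin T$ (the excluded midpoint of $T$, from the rule above with $y=\tfrac{F}{2}$) to deduce $\frac{F}{4}\notin S$, while simultaneously $\frac{F}{2}-\frac{F}{4}=\frac{F}{4}\notin S$, so the equivalence genuinely fails there.

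This yields a clean dichotomy. If $\frac{F}{4}\notin\N^p$ (equivalently $F\notin 4\N^p$), the exceptional point is not a lattice point of $\CaC$, so the symmetric condition holds for all $x\in\CaC$ and $S$ is symmetric. If $F\in 4\N^p$, then $\Fb(S)=\frac{F}{2}\in 2\N^p$ and $\frac{\Fb(S)}{2}=\frac{F}{4}$ is exactly the point where the symmetric test broke; I would then check the two clauses of Proposition~\ref{caracPSEUDOSIM}, the first being immediate and the second reducing to the already-established equivalence for $x\neq\frac{F}{4}$ and to the trivially consistent ``$\frac{F}{4}\notin S$'' at $x=\frac{F}{4}$, whence $S$ is pseudo-symmetric. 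In either case $S=\frac{T}{2}$ is irreducible.

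The main obstacle is the bookkeeping around the excluded midpoint: one must treat separately the case where $\frac{F}{4}$ is a genuine lattice point and the case where it is not, and, within the symmetric computation, verify that the equivalence $\tfrac{F}{2}-x\notin S\iff F-2x\notin T$ survives even when $\tfrac{F}{2}-x\notin\CaC$ (there both sides are trivially true, again by cone-homogeneity, since $F\in 2\N^p$ forces $\tfrac{F}{2}-x\in\Z^p$). Everything else is a routine translation between $T$ and its half $S$.
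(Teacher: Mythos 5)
Your proof is correct, and it takes a genuinely different route from the paper's. The paper argues at the level of pseudo-Frobenius elements: writing $PF(S)=\{f_1,\ldots,f_t\}$ with $f_t=\Fb(S)$, it observes that each $2f_i$ is a gap of $T$, applies Proposition~\ref{carcH(S)} together with $PF(T)=\{f_t,2f_t\}$ (via Lemma~\ref{fbQuo}) to force either $2f_t-2f_i\in T$ or $f_t-2f_i\in T$, and in either case deduces $f_i\in\{f_t,\tfrac{f_t}{2}\}$; hence $PF(S)\subseteq\{\Fb(S),\tfrac{\Fb(S)}{2}\}$, which is exactly the $PF$-characterization of irreducibility recalled in the preliminaries. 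You instead verify the membership characterizations of Propositions~\ref{caracSIM} and~\ref{caracPSEUDOSIM} directly, translating everything through $x\in S\iff 2x\in T$ and the identity $F-2x=2(\tfrac{F}{2}-x)$, with a case split on whether $\tfrac{F}{4}$ is a lattice point; both routes rest on Lemma~\ref{fbQuo}. The paper's argument buys brevity and avoids the midpoint bookkeeping; yours buys more: it simultaneously determines the type of $S$, namely symmetric exactly when $F\notin 4\N^p$ and pseudo-symmetric exactly when $F\in 4\N^p$ (equivalently $\Fb(S)\in 2\N^p$). That dichotomy is essentially Corollary~\ref{coroTpsdo-Sirre}, and in fact your version is the correct one: for the pseudo-symmetric numerical semigroup $T=\langle 3,5,7\rangle$ with $\Fb(T)=4$ one gets $\tfrac{T}{2}=\{0,3,4,5,\ldots\}$, which is pseudo-symmetric with Frobenius element $2\not\equiv 0 \bmod 4$, so the condition ``$\Fb(S)\equiv 0\bmod 4$'' in that corollary should be read as a condition on $\Fb(T)$. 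Two small facts you use implicitly and should state: $\tfrac{F}{2}\notin T$ (immediate, since otherwise $F=\tfrac{F}{2}+\tfrac{F}{2}\in T$ would contradict $F=\Fb(T)$), and that $\tfrac{F}{2}$ and, when integral, $\tfrac{F}{4}$ lie in $\CaC$ (they are in $\N^p$ and in the rational cone spanning $\CaC$), so the characterizations may legitimately be applied at those points.
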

\begin{proof}
    Let $S = \frac{T}{2}$ and suppose that $PF(S) = \{f_1,  \ldots,  f_t\}$. Assume that $\Fb(S)=f_t.$  Thus, by applying Lemma \ref{fbQuo}, we get $\Fb(T) = 2f_t$, and $PS(T)=\{f_t,2f_t\}$. Given any $i\in \{1,\ldots ,t\}$, since $f_i \in \CaH (S)$, then $2f_i \in \CaH (T)$. From Proposition \ref{carcH(S)}, and considering that $T$ is a pseudo-symmetric $\CaC$-semigroup, we have that either $2f_t - 2f_i \in T$ or $f_t-2f_i\in T$. If $2f_t - 2f_i \in T$, then $f_t - f_i \in S$, and this is possible if and only if $f_i = f_t$. If $f_t-2f_i\in T$, then $f_t=2f_i+x$ with $x\in T\subset S$. When $x \neq 0$,  we obtain that $f_t\in S$, which is impossible. Hence, $x=0$ and $f_i=f_t/2$. This proves that $PF(S) \subseteq \{\Fb(S), \frac{\Fb(S)}{2} \}$.
    Therefore, $S$ is either a symmetric or a pseudo-symmetric $\CaC$-semigroup. In both cases, $S$ is an irreducible $\CaC$-semigroup.
\end{proof}

In particular, it is not true that any $\CaC$-semigroup is one-half of a pseudo-symmetric $\CaC$-semigroup.

We can sharpen the preceding result a bit further to distinguish which cases one-half of a pseudo-symmetric $\CaC$-semigroup is symmetric or pseudo-symmetric.

\begin{corollary}
    \label{coroTpsdo-Sirre}
    Let $T$ be a pseudo-symmetric $\CaC$-semigroup and $S=\frac{T}{2}$. Then:
    \begin{enumerate}
        \item $S$ is symmetric if and only if $\Fb(S) \not\equiv 0 \mod 4$.
        \item $S$ is pseudo-symmetric if and only if $\Fb(S) \equiv 0 \mod 4$.
    \end{enumerate}
\end{corollary}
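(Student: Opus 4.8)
The plan is to lean on the two facts already available. By Proposition \ref{propTpseusT/2irr} the quotient $S=\frac{T}{2}$ is irreducible, hence \emph{either} symmetric \emph{or} pseudo-symmetric, and these alternatives are mutually exclusive; and by Lemma \ref{fbQuo}, applied to $T$ (whose Frobenius element lies in $2\N^p$ since $T$ is pseudo-symmetric), we have $\Fb(T)=2\Fb(S)$. Because items (1) and (2) are logical negations of one another once irreducibility is known, it suffices to characterise exactly when $S$ is pseudo-symmetric, item (1) following by complementation. Crucially, the proof of Proposition \ref{propTpseusT/2irr} already establishes the inclusion $PF(S)\subseteq\{\Fb(S),\tfrac{\Fb(S)}{2}\}$, so $S$ is pseudo-symmetric if and only if $\tfrac{\Fb(S)}{2}\in PF(S)$. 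This single membership is the only thing I need to decide.

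First I would record the necessary condition. For $\tfrac{\Fb(S)}{2}$ to be a candidate element of $PF(S)$ it must first lie in $\N^p$, which forces $\Fb(S)\in 2\N^p$; equivalently, since $\Fb(T)=2\Fb(S)$, this is the condition $\Fb(T)\in 4\N^p$. This is precisely the divisibility that governs the dichotomy, and I would make the translation between the condition expressed on $\Fb(S)$ and on $\Fb(T)$ explicit at the outset so that the two items can be read off cleanly. When $\Fb(S)\notin 2\N^p$, the half $\tfrac{\Fb(S)}{2}$ is not a lattice point, so $PF(S)=\{\Fb(S)\}$ and $S$ is symmetric.

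For the converse—the substance of the argument—I would assume $\Fb(S)\in 2\N^p$ and verify directly that $g:=\tfrac{\Fb(S)}{2}$ belongs to $PF(S)$. That $g$ is a gap is immediate: $g\in S$ would force $2g=\Fb(S)=\tfrac{\Fb(T)}{2}\in T$, contradicting that $\tfrac{\Fb(T)}{2}\in PF(T)\subseteq\CaH(T)$ for the pseudo-symmetric semigroup $T$. For the pseudo-Frobenius property I must show $g+s\in S$ for every $s\in S\setminus\{0\}$, i.e. $2g+2s=\tfrac{\Fb(T)}{2}+2s\in T$. Here I would invoke the characterisation of pseudo-symmetry of $T$ (Proposition \ref{caracPSEUDOSIM}): as $2s\neq 0$, the point $\tfrac{\Fb(T)}{2}+2s$ differs from $\tfrac{\Fb(T)}{2}$, so it lies in $T$ provided $\Fb(T)-\bigl(\tfrac{\Fb(T)}{2}+2s\bigr)=\tfrac{\Fb(T)}{2}-2s\notin T$. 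If the latter were in $T$, then $\tfrac{\Fb(T)}{2}=2s+\bigl(\tfrac{\Fb(T)}{2}-2s\bigr)$ would be a sum of two elements of $T$ (note $2s\in T$ because $s\in S=\tfrac{T}{2}$), hence in $T$, again contradicting that $\tfrac{\Fb(T)}{2}$ is a gap of $T$. Thus $g\in PF(S)$, and $S$ is pseudo-symmetric.

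I do not expect any single step to be a genuine obstacle; the difficulty is rather bookkeeping, keeping the three levels $\Fb(T)$, $\Fb(S)=\tfrac{\Fb(T)}{2}$ and $g=\tfrac{\Fb(S)}{2}=\tfrac{\Fb(T)}{4}$ straight, and stating the hypothesis in the form that actually drives the proof: the dichotomy is controlled exactly by whether $\tfrac{\Fb(S)}{2}$ is an integer point, that is, by $\Fb(S)\in 2\N^p$ (equivalently $\Fb(T)\in 4\N^p$). Once Proposition \ref{propTpseusT/2irr} is reused to confine $PF(S)$ to $\{\Fb(S),\tfrac{\Fb(S)}{2}\}$, both necessity and sufficiency collapse to the short observation that a sum of two elements of $T$ cannot be a gap of $T$, so the residual verification is routine.
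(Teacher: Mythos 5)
Your argument is mathematically sound, and its skeleton is essentially the paper's: combine Proposition \ref{propTpseusT/2irr} (irreducibility, hence $PF(S)\subseteq\{\Fb(S),\tfrac{\Fb(S)}{2}\}$) with Lemma \ref{fbQuo} ($\Fb(T)=2\Fb(S)$), and reduce both items to the single question of whether $\tfrac{\Fb(S)}{2}\in PF(S)$. Indeed your direct verification of that membership (via Proposition \ref{caracPSEUDOSIM} applied to $T$, together with the observation that a sum of two elements of $T$ cannot be a gap of $T$) supplies a step the paper only asserts: its proof claims ``we deduce that $\{\tfrac{\Fb(T)}{2},\tfrac{\Fb(T)}{4}\}\subseteq PF(S)$'' from the proposition and the lemma without justifying why $\tfrac{\Fb(T)}{4}$ is pseudo-Frobenius, and then re-derives the reverse inclusion from Proposition \ref{carcH(S)}, a repetition you rightly avoid since it is already contained in Proposition \ref{propTpseusT/2irr}.

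The genuine problem is the final identification, which you dismiss as ``bookkeeping.'' What your argument establishes is: $S$ is pseudo-symmetric if and only if $\Fb(S)\in 2\N^p$, equivalently if and only if $\Fb(T)\equiv 0 \mod 4$. That is \emph{not} the printed condition $\Fb(S)\equiv 0 \mod 4$, and the two are inequivalent. Already in the one-dimensional specialization (none of the quoted results uses $p>1$): $T=\langle 3,5,7\rangle$ is pseudo-symmetric with $\Fb(T)=4$, and $S=\tfrac{T}{2}=\langle 3,4,5\rangle$ has $PF(S)=\{1,2\}=\{\Fb(S),\tfrac{\Fb(S)}{2}\}$, so $S$ is pseudo-symmetric even though $\Fb(S)=2\not\equiv 0 \mod 4$; the printed corollary would declare this $S$ symmetric. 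So the statement as printed is erroneous---the modulus-four condition must be imposed on $\Fb(T)$, not on $\Fb(S)$, exactly as in the numerical-semigroup antecedent in \cite{libroRosales}---and the paper's own proof carries the same slip: from ``$PF(S)=\{\Fb(S),\tfrac{\Fb(S)}{2}\}$'' and ``$\Fb(S)=\tfrac{\Fb(T)}{2}$'' one can only conclude $\Fb(S)\in2\N^p$, i.e.\ $\Fb(T)\equiv 0\mod 4$, yet the paper writes ``hence, $\Fb(S) \equiv 0 \mod 4$.'' Your proof is therefore a proof of the corrected corollary, not of the stated one; a complete write-up should say so explicitly, state the condition as $\Fb(T)\equiv 0\mod 4$ (equivalently $\Fb(S)\in 2\N^p$), and record the erratum rather than silently identifying two inequivalent conditions.
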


\begin{proof}
We prove the first item. Suppose that $S=\frac{T}{2}$ a pseudo-symmetric $\CaC$-semigroup, then $PF(S)=\{\Fb(S),\frac{\Fb(S)}{2}\}$. By Lemma \ref{fbQuo} we know that $\Fb(S)=\frac{\Fb(T)}{2}$, hence, $\Fb(S) \equiv 0 \mod 4$. Conversely, from Proposition \ref{propTpseusT/2irr} and Lemma \ref{fbQuo}, we deduce that $\{\frac{\Fb(T)}{2},\frac{\Fb(T)}{4}\}\subseteq PF(S).$ In order to prove the other inclusion, take $f\in PF(S)$, then $f\notin S$ and thus $2f\notin T.$ From Proposition \ref{carcH(S)}, we get that $\Fb(T)-2f\in T$ or $\frac{\Fb(T)}{2}-2f\in T$. If $\Fb(T)-2f\in T$, then $2(\Fb(S)-f)\in T$, hence  $\Fb(S)-f\in S$, and this is possible if and only if $f=\frac{\Fb(S)}{2}=\frac{\Fb(T)}{4}$. Analogously to the proof of Proposition \ref{propTpseusT/2irr}, if $\frac{\Fb(T)}{2}-2f\in T$, then $f=\frac{\Fb(S)}{2}=\Fb(T)$. Therefore $PF(S)=\{\Fb(S),\frac{\Fb(S)}{2}\}$.

For the second item, we know that $S$ is irreducible (Proposition \ref{propTpseusT/2irr}), and $S$ is symmetric if and only if $S$ is not pseudo-symmetric. By the previous item, this is possible if and only if $\Fb(S) \equiv 0 \mod 4$.
\end{proof}

To finish this section, we characterise irreducible $\CaC$-semigroups relative to one-half of $\CaC$-semigroups.
Previously, we proved that any $\CaC$-semigroup can be expressed as infinitely many one-fourth of pseudo-symmetric $\CaC$-semigroups.

\begin{lemma}\label{lemaSsymTpsdo}
    Let $S$ be an irreducible $\CaC$-semigroup, $\preceq$ a monomial order and the set
    \[
    A=\{ x\in (\N^p\setminus 2\N^p)\cap \CaC \mid \Fb(S)\prec x\prec 2\Fb(S)\}.
    \]
    Then, 
    \[T=2S \cup A \cup\{x\in \CaC \mid x\succ 2\Fb(S)\}\]
    is a pseudo symmetric $\CaC$-semigroup, with Frobenius element $2\Fb(S)$, and such that $S=\frac{T}{2}$.
\end{lemma}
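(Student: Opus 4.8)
The plan is to verify the three claimed properties of $T$ in turn: that $T$ is a $\CaC$-semigroup with Frobenius element $2\Fb(S)$, that $T$ is pseudo-symmetric, and that $S = \frac{T}{2}$. The overall architecture mirrors the proof of Theorem \ref{thrmQuoSim}, but the situation here is cleaner because $S$ is already irreducible, so $PF(S)$ is either $\{\Fb(S)\}$ or $\{\Fb(S), \frac{\Fb(S)}{2}\}$, and we have direct access to Proposition \ref{caracSIM} (symmetry characterization) and Proposition \ref{caracPSEUDOSIM} (pseudo-symmetry characterization) to control membership in $S$.

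First I would show $T$ is closed under addition by a case analysis on the three defining pieces $2S$, $A$, and the "tail" $U = \{x \in \CaC \mid x \succ 2\Fb(S)\}$. Sums landing in the tail (anything involving $U$, or two elements of $A$ whose coordinatewise sum exceeds $2\Fb(S)$) are immediate. The genuine cases are $2S + 2S \subseteq 2S$ (trivial), $2S + A$, and $A + A$ when the sum does not escape to $U$. For $2S + A$: an element $2s + a$ with $s \in S$ and $a \in A$ is odd in at least one coordinate (since $a \notin 2\N^p$), hence not in $2S$; I must check it lands in $A$, i.e. it stays $\prec 2\Fb(S)$ and remains outside $2\N^p$, or else exceeds $2\Fb(S)$ and lands in $U$. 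For $A + A$: two odd-coordinate vectors sum to something that may or may not be in $2\N^p$; if it is even and $\prec 2\Fb(S)$ I must argue its half lies in $S$, which is where the symmetry/pseudo-symmetry of $S$ (via Proposition \ref{caracSIM} or \ref{caracPSEUDOSIM}) enters. Once closure is established, the complement $\CaC \setminus T$ is finite because $U \subseteq T$, so $T$ is a $\CaC$-semigroup; and since every element of $\CaC$ strictly above $2\Fb(S)$ is in $T$ while $2\Fb(S) \notin T$ (it is even and $\Fb(S) \notin S$), the Frobenius element is exactly $2\Fb(S)$, which is visibly in $2\N^p$, giving condition (i) of Proposition \ref{caracPSEUDOSIM}.

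Next I would establish $S = \frac{T}{2}$. The inclusion $S \subseteq \frac{T}{2}$ is immediate from $2S \subseteq T$. For the reverse, take $x \in \frac{T}{2}$, so $2x \in T \cap 2\N^p$; since $A$ and (the odd part of) $U$ contribute only non-even vectors below their thresholds, $2x$ must come from $2S$ or from the even part of $U$, and in either case dividing by $2$ forces $x \in S$ — using that $x \succ \Fb(S)$ implies $x \in S$ for the tail contribution. Finally, for pseudo-symmetry I apply Proposition \ref{caracPSEUDOSIM}(ii): for $x \in \CaC$ with $x \neq \frac{2\Fb(S)}{2} = \Fb(S)$, I show $x \in T \iff 2\Fb(S) - x \notin T$, splitting on the parity of $x$. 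When $x \in 2\N^p$, membership in $T$ reduces to $\frac{x}{2} \in S$, and $2\Fb(S)-x \in 2\N^p$ reduces to $\Fb(S) - \frac{x}{2}$, so the equivalence is exactly the symmetry/pseudo-symmetry condition for $S$ from Proposition \ref{caracSIM}/\ref{caracPSEUDOSIM}; when $x$ is odd in some coordinate, $2\Fb(S)-x$ has the complementary parity, and the roles of $A$ and the tail interchange around the threshold $\Fb(S)$, handling the special point $x = \Fb(S)$ separately.

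The main obstacle I anticipate is the parity bookkeeping in the $A + A$ closure case and in the odd branch of the pseudo-symmetry verification: unlike the $p = 1$ numerical-semigroup setting, a vector in $\N^p \setminus 2\N^p$ can be even in some coordinates and odd in others, so "$x$ is odd" only means $x \notin 2\N^p$, and the sum of two such vectors can be either in $2\N^p$ or not. I would therefore be careful to phrase every parity argument in terms of $2\N^p$-membership rather than a scalar notion of parity, and to lean on the defining inequalities $\Fb(S) \prec x \prec 2\Fb(S)$ in $A$ together with the irreducibility characterizations of $S$ to decide, whenever $A + A$ or $2S + A$ produces an even vector, whether its half belongs to $S$. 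The step most likely to require genuine care is confirming that an even sum of two elements of $A$ lying below $2\Fb(S)$ has its half in $S$; this is precisely where the hypothesis that $S$ is irreducible (and not merely a $\CaC$-semigroup) is indispensable.
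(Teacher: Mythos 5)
Your three-part architecture (closure of $T$, then $S=\frac{T}{2}$, then pseudo-symmetry via Proposition~\ref{caracPSEUDOSIM}) is the same as the paper's, but you have located the difficulty in the wrong place, and the step you single out as the crux does not exist. The order $\preceq$ is a monomial order: \emph{total} and compatible with addition. Hence for $a,a'\in A$ we have $a\succ \Fb(S)$ and $a'\succ \Fb(S)$, so $a+a'\succ 2\Fb(S)$, and the sum of two elements of $A$ always lands in the tail $U=\{x\in\CaC\mid x\succ 2\Fb(S)\}$. The case ``$a+a'$ even and $\prec 2\Fb(S)$'', which you call the step most likely to require genuine care and the place where irreducibility is indispensable, is vacuous; the paper settles $A+A$ in one line, and irreducibility plays no role at all in proving closure. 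Your phrase ``whose coordinatewise sum exceeds $2\Fb(S)$'' suggests you are conflating the partial order $\leq$ on $\N^p$ with the total order $\preceq$: all thresholds defining $A$ and $U$ refer to $\preceq$, for which trichotomy holds, and parity is needed only to exclude equalities such as $2s+a=2\Fb(S)$. Irreducibility enters exactly once, in the even case of the pseudo-symmetry verification, where $\Fb(T)-x=2\bigl(\Fb(S)-\frac{x}{2}\bigr)\notin 2S$ must force $\frac{x}{2}\in S$; this is Proposition~\ref{caracSIM} when $S$ is symmetric, and Proposition~\ref{caracPSEUDOSIM} (using $x\neq \Fb(S)$, i.e.\ $\frac{x}{2}\neq\frac{\Fb(S)}{2}$) when $S$ is pseudo-symmetric. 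You do invoke these propositions there, so that portion of your sketch is sound.

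The more serious gap is the odd case of pseudo-symmetry, which you compress into ``the roles of $A$ and the tail interchange around the threshold $\Fb(S)$''. That interchange requires that for $x\in\CaC\setminus 2\N^p$ with $x\prec\Fb(S)$ the reflection $2\Fb(S)-x$ lies in $\CaC$, so that it falls into $A$. For $p=1$ this is automatic, since $x<2\Fb(S)$ gives $2\Fb(S)-x>0$; but for $p\geq 2$ the relation $x\prec 2\Fb(S)$ carries no coordinatewise information, and $2\Fb(S)-x$ may fail to lie in $\N^p$ at all. In that situation Proposition~\ref{caracPSEUDOSIM} demands $x\in T$, whereas $T$ contains no element of $\CaC\setminus 2\N^p$ that is $\prec \Fb(S)$, so the equivalence you are trying to prove breaks. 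Concretely: take $\CaC=\N^2$ with a graded monomial order and $S=\N^2\setminus\{(0,1),(0,2),(0,3),(1,3)\}$, which is symmetric with $\Fb(S)=(1,3)$; then $x=(3,0)$ satisfies $x\prec\Fb(S)$, $x\notin T$, and $2\Fb(S)-x=(-1,6)\notin\N^2$, so condition (ii) of Proposition~\ref{caracPSEUDOSIM} fails at $x$ and $T$ is not pseudo-symmetric. This case is therefore not a bookkeeping formality but the real obstruction; note that the paper's own proof also steps over it silently (it deduces $\Fb(T)-x\prec\Fb(S)$ from $\Fb(T)-x\notin A$, a deduction valid only when $\Fb(T)-x\in\CaC$). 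Any complete write-up must confront this case explicitly rather than appeal to a parity interchange.
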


\begin{proof}
    Observe that 2S is closed under addition, and for any $a,a'\in A$, we have that $a+a'\in \{x\in \CaC \mid x\succ 2\Fb(S)\}$. If $x\in 2S$ and $a\in A$, then $x+a\in A \cup \{x\in \CaC \mid x\succ 2\Fb(S)\}$. Given $x\in \{x\in \CaC \mid x\succ 2\Fb(S)\}$ and $t\in T$ then, $x+t\in \{x\in \CaC \mid x\succ 2\Fb(S)\}$. Therefore, $T$ is a $\CaC$-semigroup. Besides, by construction of $T$, $\Fb(T)=2\Fb(S)$.
    
    Next we show that $S=\frac{T}{2}$. Let $x\in \frac{T}{2}$, then $2x\in T$. Since the elements of $A$ are not in $2\N^p$, $2x\in 2S \cup\{x\in \CaC \mid x\succ 2\Fb(S)\}$. If $2x\in 2S$ then $x\in S$. Otherwise, $2x\succ 2\Fb(S)$ and thus $x\in S$.
    Conversely, if $x\in S$, then $2x\in 2S\subset T$ so, $x\in \frac{T}{2}$.

    Now, we prove that $T$ is pseudo-symmetric. Let $x\in\CaC$, applying Proposition \ref{caracPSEUDOSIM}, it is enough to prove that $x\in T$ if and only if $\Fb(T)-x\notin T$ and $x\neq \frac{\Fb(T)}{2}$. Trivially, $x\in T$ implies $x\neq \frac{\Fb(T)}{2}$. Suppose that $x\in T$, we distinguish cases depending on $x$: 
    \begin{itemize} 
        \item If $x\in 2S$, then $x=2s$ for some $s\in S$. In the case that $\Fb(T)-x=2(\Fb(S)-s)$ belongs to $T$, we have $\Fb(S)-s\in S$, which is not possible since $S$ is irreducible.
        
        \item Let $x\in A$ and assume that $\Fb(T)-x\in T$. Since  $\Fb(T)-x\notin 2\N^p$, it follows that $\Fb(T)-x\succ \Fb(T)$ or $\Fb(T)-x\succ \Fb(S)$. If $\Fb(T)-x\succ \Fb(T)$ we deduce that $-x\succ 0$, which is not possible. If $\Fb(T)-x\succ \Fb(S)$, then $\Fb(S)-x\succ 0$, this implies that $\Fb(S)\succ x$, a contradiction, by hypothesis $x\in A$. We conclude that $\Fb(T)-x\notin T$.
        \item If $x\in \{x\in \CaC \mid x\succ 2\Fb(S)\}$, then $\Fb(T)-x\prec 0$ and thus $\Fb(T)-x\notin T$.
    \end{itemize}
    
    Conversely, let $x\in \CaC\setminus\{\Fb(S)\}$ and $\Fb(T)-x\notin T$. Let us show that $x\in T$. If $x\in 2\N^p$, then $x=2z$, for some $z\in \CaC$. Therefore, $\Fb(T)-x=2(\Fb(S)-z)\notin 2S$. Since $S$ is irreducible, we deduce that $z\in S$. Whence, $x=2z\in 2S\subset T$. If  $x\in \N^p\setminus2\N^p$ we assume $x\prec \Fb(T)$, otherwise $x\in T$ and the lemma holds. Since $\Fb(T)-x\notin T$ it follows that $\Fb(T)-x\notin A$ and, taking into account that $\Fb(T)-x\notin 2\N^p$, we deduce that $\Fb(T)-x\prec \Fb(S)$. Thus $x\succ \Fb(S)$ and  $x\in A\subset T$.
\end{proof}

\begin{theorem}
    Every $\CaC$-semigroup is one-fourth of infinitely many pseudo-symmetric $\CaC$-semigroups.
\end{theorem}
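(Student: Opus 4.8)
The plan is to obtain the desired pseudo-symmetric semigroups by composing two halvings, exploiting the identity $\frac{T}{4}=\frac{T/2}{2}$. First I would invoke Remark~\ref{remarkInfSim} (equivalently Theorem~\ref{thrmQuoSim}): for the given $\CaC$-semigroup $S$ there exist infinitely many symmetric $\CaC$-semigroups $S'$ with $S=\frac{S'}{2}$. Concretely, since $\CaC\setminus S$ is finite while $\CaC\cap\mathcal{O}^p$ is infinite, one can choose infinitely many $f\in\CaC\cap\mathcal{O}^p$ large enough that $f-f_i-f_j\in S$ for every $f_i,f_j\in PF(S)$; each such $f$ satisfies the hypothesis of Theorem~\ref{thrmQuoSim} and yields a symmetric $S'$ with $\Fb(S')=f$ and $S=\frac{S'}{2}$.

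Next, each such $S'$ is symmetric and therefore irreducible, so it meets the hypothesis of Lemma~\ref{lemaSsymTpsdo}. Applying that lemma to $S'$ produces a pseudo-symmetric $\CaC$-semigroup $T$ with $\Fb(T)=2\Fb(S')$ and $S'=\frac{T}{2}$. Combining the two halvings through $\frac{T/2}{2}=\{x\in\N^p\mid 2(2x)\in T\}=\frac{T}{4}$ gives $S=\frac{S'}{2}=\frac{T/2}{2}=\frac{T}{4}$, so $S$ is indeed one-fourth of the pseudo-symmetric semigroup $T$.

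Finally, to upgrade to \emph{infinitely many} I would show that distinct choices of $S'$ yield distinct $T$: if $T_1$ and $T_2$ arise from $S'_1\neq S'_2$, then $\frac{T_1}{2}=S'_1\neq S'_2=\frac{T_2}{2}$, forcing $T_1\neq T_2$. Since the infinitely many symmetric semigroups $S'$ already have pairwise distinct Frobenius elements $f$, they are pairwise distinct, and the assignment $S'\mapsto T$ then delivers infinitely many pseudo-symmetric $\CaC$-semigroups $T$ with $S=\frac{T}{4}$.

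The routine verifications are the composition identity $\frac{T}{4}=\frac{T/2}{2}$ and the injectivity of $S'\mapsto T$; neither requires any hard estimate. The only genuine point to get right is the \emph{compatibility of the two construction lemmas in sequence}: the output of Theorem~\ref{thrmQuoSim} is a symmetric, hence irreducible, $\CaC$-semigroup, which is precisely the form of input Lemma~\ref{lemaSsymTpsdo} demands. I expect that confirming this chaining, rather than any computation, is the main obstacle.
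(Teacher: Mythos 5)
Your proof is correct and follows essentially the same route as the paper: invoke Remark~\ref{remarkInfSim} to obtain infinitely many symmetric $\CaC$-semigroups $S'$ with $S=\frac{S'}{2}$, then apply Lemma~\ref{lemaSsymTpsdo} to each (symmetric, hence irreducible) $S'$ to get a pseudo-symmetric $T$ with $S=\frac{T}{4}$. The paper leaves the composition identity $\frac{T}{4}=\frac{T/2}{2}$ and the injectivity of $S'\mapsto T$ implicit, which you have usefully spelled out.
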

\begin{proof}
Let $S$ be a $\CaC$-semigroup. Applying Remark \ref{remarkInfSim} we obtain that there exists infinitely many symmetric $\CaC$-semigroups $T$ such that $S=\frac{T}{2}$. For each $T$, Lemma \ref{lemaSsymTpsdo} guarantees the existence of a pseudo-symmetric $\CaC$-semigroup $T'$ such that $S=\frac{T'}{4}$.
\end{proof}

To sum it up, as a consequence of  Corollary \ref{coroTpsdo-Sirre} and Lemma \ref{lemaSsymTpsdo}, we obtain the announced result.

\begin{theorem}\label{cairrquo}
    A $\CaC$-semigroup is irreducible if and only if it is one-half of a pseudo-symmetric $\CaC$-semigroup.
\end{theorem}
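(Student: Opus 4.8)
The plan is to prove the two implications of the equivalence by directly invoking the two structural results already established in this section, so that essentially no new computation is required; the theorem is the synthesis of Lemma~\ref{lemaSsymTpsdo} and Proposition~\ref{propTpseusT/2irr}.

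For the ``only if'' direction, I would assume that $S$ is an irreducible $\CaC$-semigroup and apply Lemma~\ref{lemaSsymTpsdo} verbatim. Fixing a monomial order $\preceq$ and forming $A=\{x\in(\N^p\setminus 2\N^p)\cap\CaC\mid \Fb(S)\prec x\prec 2\Fb(S)\}$, that lemma guarantees that $T=2S\cup A\cup\{x\in\CaC\mid x\succ 2\Fb(S)\}$ is a pseudo-symmetric $\CaC$-semigroup with $\Fb(T)=2\Fb(S)$ and $S=\frac{T}{2}$. Hence $S$ is one-half of a pseudo-symmetric $\CaC$-semigroup. The only point worth checking is that the hypothesis of Lemma~\ref{lemaSsymTpsdo} requires exactly that $S$ be irreducible, making no distinction between the symmetric and pseudo-symmetric subcases, so the construction applies uniformly to every irreducible $S$.

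For the ``if'' direction, I would assume $S=\frac{T}{2}$ for some pseudo-symmetric $\CaC$-semigroup $T$ and invoke Proposition~\ref{propTpseusT/2irr}, which states precisely that $\frac{T}{2}$ is an irreducible $\CaC$-semigroup. This closes the argument. Corollary~\ref{coroTpsdo-Sirre} further refines the conclusion by determining, according to whether $\Fb(S)\equiv 0\bmod 4$, whether $S$ is symmetric or pseudo-symmetric, but for the statement at hand only the bare irreducibility of $\frac{T}{2}$ is needed.

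I do not anticipate a genuine obstacle, since the substantive work has already been done in Lemma~\ref{lemaSsymTpsdo} and Proposition~\ref{propTpseusT/2irr}. The only care required is bookkeeping: confirming that the two cited results are mutually inverse in the sense the theorem demands---Lemma~\ref{lemaSsymTpsdo} produces a pseudo-symmetric $T$ from an arbitrary irreducible $S$, while Proposition~\ref{propTpseusT/2irr} recovers the irreducibility of $\frac{T}{2}$ from an arbitrary pseudo-symmetric $T$---so that the forward construction and the backward deduction close the loop with no gap.
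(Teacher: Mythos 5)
Your proposal is correct and matches the paper's own argument: the paper derives Theorem~\ref{cairrquo} exactly as a synthesis of Lemma~\ref{lemaSsymTpsdo} (irreducible $S$ yields a pseudo-symmetric $T$ with $S=\frac{T}{2}$) and the converse irreducibility statement, the only cosmetic difference being that the paper cites Corollary~\ref{coroTpsdo-Sirre} where you cite the underlying Proposition~\ref{propTpseusT/2irr}, which suffices since only bare irreducibility is needed.
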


\section{Arithmetic varieties of affine semigroups}\label{av}

Building upon our investigation of affine semigroup quotients, we explore arithmetic varieties defined by such quotients, which provide valuable insights into the structure of semigroups. This section introduces two intriguing families: one arising from systems of modular Diophantine inequalities, where integer solutions modulo a positive integer reveal key information about the underlying semigroup; and the other generalizing the crucial Arf property from numerical semigroups to a broader class of affine semigroups, offering new avenues for analysis (see \cite{libroRosales} for an introduction to the Arf property).

Recall that an arithmetic variety is a non-empty family of affine semigroups $\CaA$ that satisfy:
\begin{itemize}
    \item If $S, T \in \CaA$, then $S \cap T \in \CaA$.
    \item If $S \in \CaA$ and $d \in \N \setminus \{0\}$, then $\frac{S}{d} \in \CaA$.
\end{itemize}

It could be deduced that the intersection of arithmetic varieties is again an arithmetic variety. In \cite[Proposition 4]{RosalesAV}, it is proved that given a family of numerical semigroups, the intersection of all the arithmetic varieties containing this family is an arithmetic variety. Furthermore, it is the smallest that satisfies the previous condition. This also holds for affine semigroups.

A modular Diophantine inequality is an expression of the form $ax \mod b \leq cx$ with $a$, $b$, $c$ integers such that $b\neq 0$. This kind of inequality is introduced in \cite{R-G-U}, and the set of its non-negative solutions is a numerical semigroup, usually named proportionally modular numerical semigroup. If a system of modular Diophantine inequations is considered, the set of its non-negative solutions is also a numerical semigroup.

In \cite{G-M-V18}, the concept of modular Diophantine inequality is generalised to more than one variable: a modular Diophantine inequality with $p$ variables is an expression $f(x)\mod b\leq g(x)$ for some $f,g:\Q^p\to \Q$ two non-null linear functions, and a non-zero natural number $b$. In that case, its non-negative solutions make up an affine semigroup whose associated integer cone is $\N^p$. The same holds if you take any system of modular Diophantine inequations with $p$ variables. A semigroup $S$ is a proportionally modular affine semigroup if and only if there exist $a_1,\ldots, a_p\in \N$, and $g_1,\ldots, g_p\in \Z$, and a natural number $b$ such that $S=\{x\in\N^p\mid \sum_{i=1}^p a_ix_i\mod b\leq \sum_{i=1}^p g_ix_i\}$. These semigroups are not necessarily $\N^p$-semigroups. For example, if one fix $a_1=0$ and $g_1\geq 0$, then $S$ includes the infinite set $\{(x,0)\in\N^p\mid x\in \N\}$. We use the same designation (proportionally modular affine semigroup) when $S$ is defined for more than one modular Diophantine inequality. In that case, such semigroup can be expressed as $S=\{x\in\N^p\mid Ax \mod b\leq G x\}$, where $A$ and $G$ are two integers $(k \times p)$-matrices and $b$ is a positive integer matrix with $k$ entries. We can assume that all the entries in the $i$-th row of $A$ are non-negative integers lesser than $b_i$.

\begin{proposition}
The set of the proportionally modular affine semigroups in $p$ variables is an arithmetic variety.
\end{proposition}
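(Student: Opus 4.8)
The plan is to check the family $\mathcal{V}$ of proportionally modular affine semigroups in $p$ variables against the definition of an arithmetic variety: non-emptiness, together with closure under pairwise intersection and under quotient by a positive integer. I represent each member of $\mathcal{V}$ by a system $Ax\bmod b\leq Gx$, where $A,G$ are integer $(k\times p)$-matrices, $b=(b_1,\dots,b_k)$ is a vector of positive integers, and the relation is read row by row. The key leverage is the fact, already recorded above, that the set of non-negative solutions of \emph{any} such system is an affine semigroup; hence it suffices, in each case, to exhibit a defining system of this form, and membership in $\mathcal{V}$ then follows automatically. Non-emptiness is clear, since the explicit examples constructed earlier already belong to $\mathcal{V}$ (for instance the trivial system $0\leq 0$ yields all of $\N^p$).

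First I would handle intersection. Given $S,T\in\mathcal{V}$ defined by $Ax\bmod b\leq Gx$ and $A'x\bmod b'\leq G'x$, an element $x$ lies in $S\cap T$ exactly when it satisfies every row of both systems at once. Thus $S\cap T$ is the non-negative solution set of the single system obtained by stacking the two, with coefficient matrices $\left(\begin{smallmatrix}A\\A'\end{smallmatrix}\right)$ and $\left(\begin{smallmatrix}G\\G'\end{smallmatrix}\right)$ and modulus vector $(b_1,\dots,b_k,b'_1,\dots,b'_{k'})$. This is again a system of modular Diophantine inequalities in $p$ variables, so $S\cap T\in\mathcal{V}$.

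Next I would treat the quotient. Fix $S\in\mathcal{V}$ defined by $Ax\bmod b\leq Gx$ and $d\in\N\setminus\{0\}$. By definition $\frac{S}{d}=\{x\in\N^p\mid dx\in S\}$, and $dx\in S$ means $A(dx)\bmod b\leq G(dx)$. Since $A(dx)=(dA)x$ and $G(dx)=(dG)x$, the set $\frac{S}{d}$ is precisely the non-negative solution set of the system $(dA)x\bmod b\leq (dG)x$, whose coefficient matrices $dA$ and $dG$ are integer matrices. Hence $\frac{S}{d}\in\mathcal{V}$.

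The one point I expect to require care, and where I would be explicit, is the standing normalization that each entry of the ``mod'' matrix in row $i$ be a non-negative integer strictly below $b_i$: the matrix $dA$ arising from the quotient may violate this. This is harmless, because for non-negative integers $x_1,\dots,x_p$ and arbitrary integers $c_1,\dots,c_p$ one has $\left(\sum_j c_j x_j\right)\bmod b_i=\left(\sum_j (c_j\bmod b_i)\,x_j\right)\bmod b_i$, owing to the congruence $\sum_j c_j x_j\equiv\sum_j (c_j\bmod b_i)\,x_j \pmod{b_i}$. Reducing each row of $dA$ modulo the corresponding $b_i$ therefore returns the quotient system to normalized form without altering its solution set, which confirms that $\frac{S}{d}$ is a genuine proportionally modular affine semigroup and completes the proof that $\mathcal{V}$ is an arithmetic variety.
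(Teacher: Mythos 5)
Your proof is correct and takes essentially the same approach as the paper: intersection is handled by stacking (the paper says ``union of'') the two systems of modular Diophantine inequalities, and the quotient by $d$ is exhibited as the solution set of $(dA)x \bmod b \leq (dG)x$. Your additional care about re-normalizing the rows of $dA$ modulo $b_i$ addresses a detail the paper leaves implicit, but it does not alter the substance of the argument.
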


\begin{proof}
Let $S$ and $S'$ be two proportionally modular affine semigroups in $p$ variables. Trivially, $S\cap S'$ is the proportionally modular affine semigroup defined by the union of the modular Diophantine inequalities of $S$ and $S'$.

If $S$ is determined by $\{x\in\N^p\mid Ax \mod b\leq G x\}$, and $d$ is a positive integer, then $\frac{S}{d}$ is also a proportionally modular affine semigroup defined by $\{x\in\N^p\mid dAx \mod b\leq dG x\}$. So, the proposition holds.
\end{proof}

We say that an affine semigroup $S$ has the \textit{Arf property} or is an \textit{Arf} (affine) semigroup if, for any $x,y,z\in S$ with $x\geq y \geq z$, then $x+y-z\in S$. We say that an affine semigroup $S$ is \textit{saturated} if $s,s_1,\ldots, s_r\in S$ are such that $s_i\leq s$ for all $i\in[r]\setminus\{0\}$ and $z_1,\ldots, z_r\in\Z$ are such that $z_1s_1+\cdots+z_rs_r\in \N^p$  then, $s+z_1s_1+\cdots+z_rs_r\in S$. Note that, as occurs for numerical semigroups, a saturated affine semigroup has the Arf property \cite[Lemma 3.31]{libroRosales}.

\begin{proposition}
    The set of Arf affine semigroups is an arithmetic variety.
\end{proposition}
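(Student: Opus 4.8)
The plan is to verify directly that the family of Arf affine semigroups satisfies the two defining conditions of an arithmetic variety, together with non-emptiness. For non-emptiness, I would observe that $\N^p$ itself is Arf: if $x\geq y\geq z$ in $\N^p$, then $x-z\in\N^p$, so $x+y-z=(x-z)+y\in\N^p$. This guarantees the family is non-empty before checking the closure conditions.

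For closure under intersection, let $S$ and $T$ be Arf affine semigroups, and take $x,y,z\in S\cap T$ with $x\geq y\geq z$. Applying the Arf property in $S$ gives $x+y-z\in S$, and applying it in $T$ gives $x+y-z\in T$, whence $x+y-z\in S\cap T$. Thus $S\cap T$ inherits the Arf property, and since it is an affine semigroup in the ambient framework, it lies in the family.

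For closure under quotient, fix an Arf affine semigroup $S$ and $d\in\N\setminus\{0\}$. Since $\frac{S}{d}$ is already known to be an affine semigroup, only the Arf property must be checked. The key observation is that multiplication by $d$ preserves the partial order $\leq$: if $u\geq v$, then $u-v\in\N^p$, hence $d(u-v)=du-dv\in\N^p$, i.e. $du\geq dv$. Taking $x,y,z\in\frac{S}{d}$ with $x\geq y\geq z$, this yields $dx\geq dy\geq dz$, and all three lie in $S$ by the definition of the quotient. The Arf property of $S$ then gives $dx+dy-dz=d(x+y-z)\in S$; since $x\geq z$ ensures $x+y-z\in\N^p$, we conclude $x+y-z\in\frac{S}{d}$. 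Hence $\frac{S}{d}$ is Arf.

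I expect no genuine obstacle here: the entire argument rests on the single elementary fact that scaling by the positive integer $d$ is compatible with $\leq$, which lets one transport the Arf witness back and forth through the quotient. The only point requiring an external input, namely that $\frac{S}{d}$ and $S\cap T$ remain affine semigroups, is supplied by the earlier results of the paper rather than by the Arf argument itself.
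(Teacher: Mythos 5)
Your proposal is correct and follows essentially the same route as the paper's own proof: verify that $\N^p$ is Arf, that intersections of Arf semigroups are Arf, and that for $x\geq y\geq z$ in $\frac{S}{d}$ one has $d(x+y-z)\in S$, hence $x+y-z\in\frac{S}{d}$. You simply spell out details (such as the compatibility of scaling by $d$ with $\leq$) that the paper leaves implicit.
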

\begin{proof}
    Observe that $S=\N^p$ is an Arf semigroup. On the one hand, for every two semigroups $S$ and $T$ with the Arf property, $S\cap T$ is an Arf semigroup. On the other hand, note for any positive integer $d$ and any Arf semigroup $S$, since $S$ is an Arf semigroup, it follows that $d(x+y-z)\in S$, for any $x,y,z\in \frac{S}{d}$ such that $x\geq y \geq z$. Whence, $\frac{S}{d}$ has the Arf property.
\end{proof}

\begin{proposition}
    The set of saturated affine semigroups is an arithmetic variety.
\end{proposition}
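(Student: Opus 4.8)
The plan is to mirror the structure of the two preceding propositions, verifying the two defining closure conditions of an arithmetic variety directly from the definition of saturated affine semigroup. First I would note that $S=\N^p$ is trivially saturated (indeed the condition is vacuous or automatic since every combination landing in $\N^p$ already lies in $\N^p$), so the family is non-empty. The work then splits into closure under intersection and closure under quotient by a positive integer.

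For closure under intersection, suppose $S$ and $T$ are saturated and take $s,s_1,\dots,s_r\in S\cap T$ with $s_i\le s$ for all $i$, together with integers $z_1,\dots,z_r$ such that $z_1s_1+\cdots+z_rs_r\in\N^p$. Applying the saturation hypothesis in $S$ gives $s+z_1s_1+\cdots+z_rs_r\in S$, and the same data applied in $T$ gives membership in $T$; hence the element lies in $S\cap T$, so $S\cap T$ is saturated.

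For closure under quotient, let $S$ be saturated and $d\in\N\setminus\{0\}$, and take $s,s_1,\dots,s_r\in\frac{S}{d}$ with $s_i\le s$ and integers $z_i$ with $\sum_i z_i s_i\in\N^p$. The natural move is to multiply through by $d$: then $ds,ds_1,\dots,ds_r\in S$, the inequalities $ds_i\le ds$ are preserved, and $\sum_i z_i(ds_i)=d\sum_i z_i s_i\in\N^p$ since $\sum_i z_i s_i\in\N^p$. Saturation of $S$ then yields $ds+\sum_i z_i(ds_i)=d\bigl(s+\sum_i z_i s_i\bigr)\in S$, which by definition of the quotient means $s+\sum_i z_i s_i\in\frac{S}{d}$. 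Thus $\frac{S}{d}$ is saturated.

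I expect no genuine obstacle here: the saturation condition is linear in the sense that scaling all the relevant elements by $d$ preserves both the order relations $s_i\le s$ and the integrality of $\sum_i z_i s_i$, so the quotient argument goes through by the same elementary scaling trick used in the Arf case. The only point requiring a moment of care is that $\frac{S}{d}$ is indeed an affine semigroup (so that it is a legitimate object of the family) and that the coefficients $z_i$ may be negative, so one must keep track that it is the final combination, not the intermediate terms, that must lie in $\N^p$; since the $z_i$ are reused unchanged after scaling by $d$, this bookkeeping is immediate.
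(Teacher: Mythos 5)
Your proof is correct and follows essentially the same route as the paper: non-emptiness via $\N^p$, intersection by applying saturation in each factor, and the quotient case by scaling all data by $d$ and using saturation of $S$ to conclude $d\bigl(s+\sum_i z_i s_i\bigr)\in S$. In fact your write-up is slightly more careful than the paper's, which compresses (and contains a small notational slip in) exactly the scaling argument you spell out.
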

\begin{proof}
    Observe that $S=\N^p$ is a saturated semigroup. As occurs for Arf semigroups, the intersection of saturated semigroups is again a saturated semigroup. Consider a positive integer $d$ and a saturated semigroup $S$. Given $x,x_1,\ldots,x_r\in \frac{S}{d}$ such that $x_i\leq x$, for all, $i\in[r]\setminus\{0\}$ and $z_1,\ldots, z_r\in\Z$ such that $z_1x_1+\cdots+z_rx_r\in \N^p$, $d(s+z_1s_1+\cdots+z_rs_r)\in S$ since $S$ is saturated. Whence, $\frac{S}{d}$ is a saturated semigroup.
\end{proof}

\subsection*{Funding}

The first and third-named authors were supported partially by Junta de Andalucía research group FQM 343.

The last author is partially supported by grant PID2022-138906NB-C21 funded by MCIN/AEI/10.13039/501100011033 and by ERDF ''A way of making Europe''.

Consejería de Universidad, Investigación e Innovación de la Junta de Andalucía project ProyExcel\_00868 also partially supported all the authors.

Proyectos de investigación del Plan Propio – UCA 2022-2023 (PR2022-011 and PR2022-004) partially supported the first, and third-named authors.

This publication and research have been partially granted by INDESS (Research University Institute for Sustainable Social Development), Universidad de Cádiz, Spain.

\subsubsection*{Author information}

J. I. Garc\'{\i}a-Garc\'{\i}a. Departamento de Matem\'aticas/INDESS (Instituto Universitario para el Desarrollo Social Sostenible),
Universidad de C\'adiz, E-11510 Puerto Real (C\'{a}diz, Spain).
E-mail: ignacio.garcia@uca.es.

\noindent
R. Tapia-Ramos. Departamento de Matem\'aticas, Universidad de C\'adiz, E-11406 Jerez de la Frontera (C\'{a}diz, Spain).
E-mail: raquel.tapia@uca.es. 

\noindent
A. Vigneron-Tenorio. Departamento de Matem\'aticas/INDESS (Instituto Universitario para el Desarrollo Social Sostenible), Universidad de C\'adiz, E-11406 Jerez de la Frontera (C\'{a}diz, Spain).
E-mail: alberto.vigneron@uca.es.

\end{document}